\newtheorem{remark}{\bfseries Remark}
\newtheorem{theorem}{\bfseries Theorem}
\newtheorem{assumption}{\bfseries Assumption}
\newtheorem{proposition}{\bfseries Proposition}
\newtheorem{definition}{\bfseries Definition}
\newcommand{\T}{^{\mbox{\tiny T}}}
\def\R{\mathbb{R}}
\def\Z{\mathbb{Z}}
\let\mathbb=\mathds 
\def \defn{\stackrel{\triangle}{=}}
\def \defn{\stackrel{\triangle}{=}}
\newenvironment{list4}{
  \begin{list}{$\bullet$}{
      \setlength{\itemsep}{0.05cm}
      \setlength{\labelsep}{0.2cm}
      \setlength{\labelwidth}{0.3cm}
      \setlength{\parsep}{0in}
      \setlength{\parskip}{0in}
      \setlength{\topsep}{0in}
      \setlength{\partopsep}{0in}
      \setlength{\leftmargin}{0.21in}}}
      {\end{list}}
\newenvironment{list5}{
  \begin{list}{$\bullet$}{
      \setlength{\itemsep}{0.05cm}
      \setlength{\labelsep}{0.2cm}
      \setlength{\labelwidth}{0.3cm}
      \setlength{\parsep}{0in}
      \setlength{\parskip}{0in}
      \setlength{\topsep}{0in}
      \setlength{\partopsep}{0in}
      \setlength{\leftmargin}{0.17in}}}
      {\end{list}}
\definecolor{themisblue}{rgb}{0.76, 0.89, 1.0}
\begin{document}
%
%
%
%
\title{\LARGE \bf Fully Distributed Alternating Direction Method of Multipliers \\ in Digraphs {via Finite-Time Termination Mechanisms}}

\author{Wei Jiang and Themistoklis Charalambous,~\IEEEmembership{Senior Member, IEEE}
\thanks{W. Jiang and T. Charalambous are with the Department of Electrical Engineering and Automation, School of Electrical Engineering, Aalto University, Espoo, Finland.
Emails: {\tt \{name.surname\}@aalto.fi}.}
\thanks{Preliminary results of this work have been {accepted} in the European Control Conference (ECC) \cite{ECC2021:Wei-Themis}. In the journal version, we propose a new algorithm in which no global information is needed about the size of the network in order to be able to run the distributed ADMM algorithm (unlike the algorithm proposed in our paper in ECC). In addition, we show that  if the global objective function is strongly convex and smooth, the proposed algorithms have an ``approximate'' R-linear  convergence rate.}
}


\maketitle


%
%
%
%
\begin{abstract}
In this work, we consider the distributed optimization problem in which each node has its own convex cost function and can communicate directly only with its neighbors, as determined by a directed communication topology (directed graph or digraph). First, we reformulate the optimization problem so that  Alternating Direction Method of Multipliers (ADMM) can be utilized. Then, we propose an algorithm, herein called Distributed Alternating Direction Method of Multipliers using Finite-Time Exact Ratio Consensus (D-ADMM-FTERC), to solve the multi-node convex optimization problem, in which every node performs iterative computations and exchanges information with its neighbors. At every iteration of D-ADMM-FTERC, each node solves a local convex optimization problem for the one of the primal variables and utilizes a finite-time \emph{exact} consensus protocol to obtain the optimal value of the other variable, since the cost function for the second primal variable is not decomposable. 
Since D-ADMM-FTERC requires to know the upper bound on the number of nodes in the network, we furthermore propose a { new algorithm,} called Fully D-ADMM Finite-Time Distributed Termination (FD-ADMM-FTDT) {algorithm,} which do{es} not need any global information.
If the individual cost functions are convex and not-necessarily differentiable, the proposed algorithms converge at a rate of $O(1/k)$, where $k$ is the iteration counter. 
Additionally, if the global objective function is strongly convex and smooth, the proposed algorithms {have} an ``approximate'' R-linear  convergence rate. 
The efficacy of FD-ADMM-FTDT is demonstrated via a distributed $ l_1 $ regularized logistic regression optimization example. Additionally, comparisons with other state-of-the-art algorithms are provided on large-scale networks showing the superior precision and time-efficient performance of FD-ADMM-FTDT.
\end{abstract}

\begin{keywords}
Distributed optimization, directed graphs, alternating direction method of multipliers (ADMM), ratio consensus, finite-time consensus, termination algorithm. 
\end{keywords}

\IEEEpeerreviewmaketitle
%
%
%
%
\section{Introduction}
\label{sec:introduction}

\subsection{Motivation}

\IEEEPARstart{T}{he} main objective is the solution of an additive cost optimization problem over a digraph in a distributed fashion, where each individual cost is known solely to the node; this type of problems is often referred to as distributed optimization problem and a wide variety of engineering problems (e.g., wireless sensor networks \cite{2004:Sensors} and machine learning \cite{boyd2011distributed,Nedic2020Magazine}) fall within this framework. For this reason, even though such problems were targeted already in the 80's \cite{1986:tsitsiklis,1989:BnT}, the field of distributed optimization has attracted a lot of attention by the research community again recently; see, for example,~\cite{phdthesis:Johansson2008,2009:Johansson, nedic2009distributed, makhdoumi2017convergence, Rabbat:ProceedingsIEEE2018, Tao:2019} and references therein.

\vspace{-0.3cm}
\subsection{Related Work}

There are two main research strands for solving distributed optimization methods in the literature: \emph{(i)} primal and \emph{(ii)} dual-based optimization methods. Our work falls in the strand of dual-based optimization methods, and more specifically on distributed approaches for realizing the ADMM. In that direction, there are two main communication topologies considered: \emph{(i)} master-workers communication topology and \emph{(ii)} multi-node communication topology. When the ADMM has a master-worker communication topology, the worker nodes optimize their local objectives and communicate their local variables to the master node which updates the global optimization variable and send it back to the workers. When the ADMM has no master node, the optimization problem is solved over a network of nodes. Here in, we focus on the ADMM realized on multi-node communication topologies.

There have been several ADMM algorithms proposed for the case which the multi-node communication topology assumes that every communication link is bidirectional, thus forming a communication topology represented by an undirected graph; see, for example, \cite{wei2012distributed,ADMM_Shi:2014,makhdoumi2017convergence,falsone2020tracking}. In the case for which some communication links are not necessarily bidirectional, these approaches fail to converge to the optimal solution. Distributed ADMM approaches for digraphs are very limited. The first distributed ADMM approach for directed graphs with convergence guarantees \cite{khatana2020d}, and the inspiration for this work, proposes a consensus-based approach to compute one of the primal variables of ADMM for digraphs. Specifically, at every step, while one of the primal variables and the Lagrange multiplier are computed at the node itself, the other primal variable is \emph{approximated} by running a consensus algorithm that produces asymptotic convergence for a finite number of steps and, as a consequence, an approximate solution at every optimization step is obtained. 

\subsection{Our Contributions}
The contributions of our paper are the following:
\begin{list5}
\item[1)] First, we enhance a distributed protocol proposed in \cite{themisCDC:2013,themisTCNS:2015} and used in D-ADMM-FTERC, with a distributed termination mechanism \cite{charalambous2018stop}, with which each node in a digraph can compute the average consensus over a \emph{minimal} number of steps and agree with the other nodes in the network when to terminate their iterations, provided they have all computed their exact average. More specifically, we modify the distributed termination mechanism \cite{charalambous2018stop} to allow for the nodes to synchronize the optimization steps, without requiring any global information.
\item[2)] Next, we propose a distributed ADMM algorithm, FD-ADMM-FTDT, that solves \emph{exactly} the multi-node convex optimization problem in digraphs. At every iteration of the algorithm, each node solves a local convex optimization problem for one of the primal variables by utilizing the proposed fully distributed finite-time consensus protocol to compute the \emph{exact} optimal of the other primal variable. 
\item[3)] FD-ADMM-FTDT performance is evaluated via extensive simulations and compared with the only other known distributed ADMM approach suitable for directed graphs \cite{khatana2020d}, and it is shown that our approach apart from computing the exact solution (unlike the one in \cite{khatana2020d}), it requires fewer iterations per optimization step and therefore the optimization speed is accelerated.
\end{list5}

\subsection{Organization}

The remainder of the paper is organized as follows. In Section \ref{sec:preliminaries}, we provide necessary notation and background knowledge for the development of our results. In Section \ref{sec:formulation}, the problem to be solved is formulated, and in Section \ref{sec:results} our proposed algorithms are explained. Illustrative examples are presented in Section~\ref{sec:examples}. Finally, Section \ref{sec:conclusions} presents concluding remarks and future directions.

%
%
%
%
\section{Notation and Preliminaries}\label{sec:preliminaries}

\subsection{Notation}

The set of real (integer) numbers is denoted by $\mathds{R}$ ($\mathds{Z}$) and the set of positive numbers (integers) is denoted by $\mathds{R}_{+}$ ($\mathds{Z}_{+}$).  $\mathds{R}^n_+$ denotes the non-negative orthant of the $n$-dimensional real space $\mathds{R}^n$. Vectors are denoted by small letters whereas matrices are denoted by capital letters.  $A\T$ denotes the transpose of matrix $A$. The $i^{th}$ component of a vector $x$ is denoted by $x_i$, and the notation $x\geq y$ implies that $x_i\geq y_i$ for all components $i$. For $A\in \mathbb{R}^{n\times n}$, $a_{ij}$ denotes the entry in row $i$ and column $j$.
By $\mathbb{1}$ we denote the all-ones vector and by $I$ we denote the identity matrix (of appropriate dimensions). We also denote by $e\T _j=[0,\ldots,0, 1_{j^{th}}, 0, \ldots, \ldots, 0] \in {\R}^{1 \times n}$, where the single ``1'' entry is at the $j^{\textrm{th}}$ position. $|A|$ is the element-wise absolute value of matrix A (i.e., $|A|\triangleq[|A_{ij}|]$), $A \leq B$ ($A<B$) is the (strict) element-wise inequality between matrices $A$ and $B$.
A matrix whose elements are nonnegative, called nonnegative matrix, is denoted by $A \geq 0$ and a matrix whose elements are positive, called positive matrix, is denoted by $A>0$.
$ \|x\| $ denotes the Euclidean norm of $ x $.
$ \langle a, b\rangle $ denotes the usual Euclidean inner product $ a\T b $.
{Suppose  that the sequence $ \{x_k\} $ converges to $ x^{*} $. The sequence is said to converge Q-linearly to $ x^{*} $ if there exists a number $  \mu \in (0,1) $  such that $ \|x^{k+1}- x^{*} \|/ \|x^{k}- x^{*} \| \le \mu $. The sequence is said to converge R-linearly to $ x^{*} $ if there exists a sequence $ \{\varepsilon _{k}\} $ such that  $ \|x^{k}- x^{*} \| \le \varepsilon _{k} $ and  $ \{\varepsilon _{k}\} $  converges Q-linearly to zero.}

In multi-component systems with fixed communication links (edges), the exchange of information between components (nodes) can be conveniently captured by a directed graph (digraph) $\mathcal{G}(\mathcal{V}, \mathcal{E})$ of order $n$ $(n \geq 2)$, where $\mathcal{V} = \{v_1,v_2,\ldots,v_n\}$ is the set of nodes and $\mathcal{E} \subseteq \mathcal{V} \times \mathcal{V}$ is the set of edges. A directed edge from node $v_i$ to node $v_j$ is denoted by $\varepsilon_{ji} = (v_j, v_i)\in \mathcal{E}$ and represents a communication link that allows node $v_j$ to receive information from node $v_i$. A graph is said to be undirected if and only if $\varepsilon_{ji} \in \mathcal{E}$ implies $\varepsilon_{ij}  \in \mathcal{E}$. A digraph is called \emph{strongly} connected if there exists a path from each vertex $v_i$ of the graph to each vertex $v_j$ ($v_j \neq v_i$). In other words, for any $v_j, v_i \in \mathcal{V}$, $v_j\neq v_i$, one can find a sequence of nodes $v_i = v_{l_1}$, $v_{l_2}$, $v_{l_3}$, $\ldots$, $v_{l_m}=v_j$ such that link $(v_{l_{s+1}}, v_{l_{s}}) \in \mathcal{E}$ for all $s = 1, 2, \ldots, m-1$. The diameter $D$ of a graph is the longest shortest path between any two nodes in the network.

All nodes that can transmit information to node $v_j$ directly are said to be in-neighbors of node $v_j$ and belong to the set $\mathcal{N}^{-}_j=\{ v_i \in \mathcal{V} \; | \; \varepsilon_{ji} \in \mathcal{E} \}$. The cardinality of $\mathcal{N}^{-}_j$, is called the \emph{in-degree} of $v_j$ and is denoted by $\mathcal{D}^{-}_{j}=\left| \mathcal{N}^{-}_j \right|$. The nodes that receive information from node $v_j$ belong to the set of out-neighbors of node $v_j$, denoted by $\mathcal{N}^{+}_j=\{ v_l \in \mathcal{V} \; | \; \varepsilon_{lj} \in \mathcal{E} \}$. The cardinality of $\mathcal{N}^{+}_j$, is called the \emph{out-degree} of $v_j$ and is denoted by $\mathcal{D}^{+}_{j}= \left| \mathcal{N}^{+}_j \right|$.

\vspace{-0.3cm}
\subsection{Average Consensus}


In the type of algorithms we consider, we associate a positive weight $p_{ji}$ for each edge $\varepsilon_{ji} \in \mathcal{E} \cup \{ (v_j, v_j) \; | \: v_j \in \mathcal{V} \}$. The nonnegative matrix $P = [p_{ji} ] \in \mathbb{R}_{+}^{n\times n}$ (with $p_{ji}$ as the entry at its $j$th row, $i$th column position) is a weighted adjacency matrix (also referred to as weight matrix) that has zero entries at locations that do not correspond to directed edges (or self-edges) in the graph. In other words, apart from the main diagonal, the zero-nonzero structure of the adjacency matrix $P$ matches exactly the given set of links in the graph. In a synchronous setting, each node $v_j$ updates and sends its information to its neighbors at discrete times $T_0, T_1, T_2, \ldots$. We index nodes' information states and any other information at time $T_t$ by $t$. Hence, we use $w_j^t \equiv w_j[t]  \in \mathbb{R}$ to denote the information state of node $j$ at time $T_t$. Note that ${w_j^{t}}\T$ denotes (is equivalent to) $w_j[t]\T$.

Each node updates its information state $w_{j}^t$ by combining the available information received by its neighbors $w_{i}^t$ ($v_i\in \mathcal{N}^{-}_j$) using the positive weights $p_{ji}^t$, that capture the weight of the information inflow from node $v_i$ to node $v_j$ at time $t$. In this work, we assume that each node $v_j$ can choose its self-weight and the weights on its out-going links $\mathcal{N}^{+}_j $ only. Hence, in its general form, each node updates its information state according to the following relation:
\begin{align}\label{eq:1_1}
w_{j}^{t+1} =p_{jj}w_{j}^t + \sum_{v_i \in \mathcal{N}^{-}_j}  p_{ji} w_{i}^t \;, \ \ k\geq 0\;,
\end{align}
\noindent where  $w_{j}^0 \in \mathbb{R}$ is the initial state of node $v_j$. If we let $w^t=(w_1^t \ \ w_2^t \ \ \ldots  \ \  w_n^t )\T$ and $P= [p_{ji} ] \in \mathbb{R}_{+}^{n\times n}$, then \eqref{eq:1_1} can be written in matrix form as
\begin{align}\label{eq:2_1}
w^{t+1} =Pw^t ,
\end{align}
\noindent where $w^0 = (w_1^0 \ \ w_2^0 \ \ \ldots  \ \  w_n^0 )\T \triangleq w_0$. We say that the nodes asymptotically reach average consensus if
$$
\lim_{t \rightarrow \infty} w_j^t = \frac{\sum_{v_i\in\mathcal{V}} w_i^0}{n}\; , \quad \forall v_j\in\mathcal{V} \; .
$$
The necessary and sufficient conditions for \eqref{eq:2_1} to reach average consensus are the following: (a) $P$ has a simple eigenvalue at one with left eigenvector  $\mathbb{1}\T$ and right eigenvector $\mathbb{1}$, and (b) all other eigenvalues of $P$ have magnitude less than $1$. If $P\geq 0$ (as in our case), the necessary and sufficient condition is that $P$ is a primitive doubly stochastic matrix. In an undirected graph, assuming each node knows $n$ (or an upper bound $n{'}$) and the graph is connected, each node $v_j$ can distributively choose the weights on its outgoing links to be $\frac{1}{n'}$ and set its diagonal to be $1-\frac{\mathcal{D}_j^+}{n'}$ (where $\mathcal{D}_j^+ = \mathcal{D}_j^- \triangleq \mathcal{D}_j$), so that the resulting $P$ is primitive doubly stochastic. However, this weight selection does not necessarily yield a doubly stochastic weight matrix in a digraph.

\vspace{-0.3cm}
\subsection{Ratio Consensus}\label{preliminary_rc}

In \cite{2010:christoforos}, an algorithm is suggested, called ratio consensus, that solves the average consensus problem in a directed graph in which each node $v_j$ distributively sets the weights on its self-link and outgoing-links to be $\frac{1}{1+\mathcal{D}_j^+}$, so that the resulting weight matrix $P$ is column stochastic, but not necessarily row stochastic. Average consensus is reached by using this weight matrix to run two iterations with appropriately chosen initial conditions. The algorithm is stated below for a specific choice of weights on each link that assumes that each node knows its out-degree; note, however, that the algorithm  works for any set of weights that adhere to the graph structure and form a primitive column stochastic weight matrix.

\begin{proposition}[\hspace{-0.001cm \cite{2010:christoforos}}]\label{lemma_christoforos}
Consider a strongly connected digraph $\mathcal{G}(\mathcal{V}, \mathcal{E})$. Let $y_j^t$ and $x_j^t$ (for all $v_j \in \mathcal{V}$ and $t=0,1,2,\ldots$) be the result of the iterations
\begin{subequations}\label{eq:1}
\begin{align}
y_j^{t+1}=p_{jj} y_j^t+ \sum_{v_i \in \mathcal{N}^{-}_j} p_{ji} y_i^t \; , \label{subeq:1} \\
x_j^{t+1}=p_{jj} x_j^t+ \sum_{v_i \in \mathcal{N}^{-}_j} p_{ji} x_i^t \; , \label{subeq:2}
\end{align}
\end{subequations}
where $p_{lj} = \frac{1}{1 + \mathcal{D}_j^+}$ for $v_l \in \mathcal{N}_j^+ \cup \{ v_j \}$ (zeros otherwise), and the initial conditions are $y^0=y_0$ and  $x^0=\mathbb{1}$. Then, the solution to the average consensus problem can be asymptotically obtained as
$
\displaystyle \lim_{t\rightarrow \infty} \mu_j^t=\frac{\sum_{v_i\in \mathcal{V}} y_i^0}{|\mathcal{V}|} \; , \forall v_j \in \mathcal{V} \; ,
$
where
$
\displaystyle \mu_j^t=\frac{y_j^t}{x_j^t} \; .
$
\end{proposition}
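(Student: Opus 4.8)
The plan is to recast both recursions in matrix form and then exploit Perron--Frobenius theory for the common iteration matrix $P=[p_{ji}]$. From \eqref{eq:1} we immediately have $y^{t}=P^{t}y^{0}$ and $x^{t}=P^{t}\mathbb{1}$, so the whole argument reduces to understanding the limiting behaviour of $P^{t}$. First I would verify two structural properties of $P$. By the stated weight choice, column $j$ of $P$ has exactly $1+\mathcal{D}_j^{+}$ nonzero entries, each equal to $\tfrac{1}{1+\mathcal{D}_j^{+}}$, so every column sums to one; hence $P$ is column stochastic, i.e. $\mathbb{1}\T P=\mathbb{1}\T$. Moreover, because the digraph is strongly connected, $P$ is irreducible, and since every self-weight $p_{jj}>0$ it has a positive diagonal; a nonnegative irreducible matrix with positive diagonal is primitive.

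Next I would invoke the Perron--Frobenius theorem for primitive matrices. Since $P$ is primitive and column stochastic, its spectral radius equals $1$, this eigenvalue is simple, and all remaining eigenvalues have modulus strictly less than $1$. The associated left Perron eigenvector is $\mathbb{1}\T$, while the right Perron eigenvector is a strictly positive vector $\pi>0$, which I normalise so that $\mathbb{1}\T\pi=1$; the standard consequence is
\begin{align}
\lim_{t\to\infty}P^{t}=\pi\,\mathbb{1}\T . \label{eq:Plimit}
\end{align}
With \eqref{eq:Plimit} in hand, the limits of the two iterations follow at once:
\begin{align}
\lim_{t\to\infty}y^{t}=\pi\,\mathbb{1}\T y^{0}=\Big(\textstyle\sum_{v_i\in\mathcal{V}}y_i^{0}\Big)\pi ,\qquad \lim_{t\to\infty}x^{t}=\pi\,\mathbb{1}\T\mathbb{1}=|\mathcal{V}|\,\pi . \notag
\end{align}
Componentwise, $y_j^{t}\to\pi_j\sum_{v_i\in\mathcal{V}}y_i^{0}$ and $x_j^{t}\to|\mathcal{V}|\,\pi_j$. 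Before forming the ratio $\mu_j^t=y_j^t/x_j^t$ I would confirm that it is well defined for every $t$: since $x^{0}=\mathbb{1}>0$ and $p_{jj}>0$, relation \eqref{subeq:2} gives $x_j^{t+1}\geq p_{jj}x_j^{t}$, so by induction $x_j^{t}>0$ for all $t$. Taking the quotient and using that $\pi_j>0$ cancels from numerator and denominator then yields
\begin{align}
\lim_{t\to\infty}\mu_j^{t}=\lim_{t\to\infty}\frac{y_j^{t}}{x_j^{t}}=\frac{\pi_j\sum_{v_i\in\mathcal{V}}y_i^{0}}{|\mathcal{V}|\,\pi_j}=\frac{\sum_{v_i\in\mathcal{V}}y_i^{0}}{|\mathcal{V}|} , \notag
\end{align}
for every $v_j\in\mathcal{V}$, which is the claim.

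The crux of the argument, and the only non-routine step, is the convergence \eqref{eq:Plimit}: it hinges on establishing primitivity of $P$ (strong connectivity supplies irreducibility, and the positive self-weights upgrade this to primitivity) together with the simplicity of the eigenvalue $1$, so that $P^{t}$ has a genuine rank-one limit rather than merely a bounded orbit. The elegant point that makes the ratio work is that the unknown Perron vector $\pi$ appears identically in both limits and therefore cancels; this is precisely why running the auxiliary iteration $x^{t}$ from $x^{0}=\mathbb{1}$ removes the dependence on $\pi$ and recovers the true average, instead of the weighted quantity $\pi_j\sum_{v_i\in\mathcal{V}}y_i^{0}$ to which the single iteration $y^{t}$ alone would converge.
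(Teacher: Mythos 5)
Your proof is correct. The paper itself gives no proof of this proposition---it is quoted as a known result from the cited reference---and your argument (column stochasticity so $\mathbb{1}\T P=\mathbb{1}\T$, irreducibility from strong connectivity plus positive diagonal giving primitivity, hence $P^{t}\to\pi\,\mathbb{1}\T$ by Perron--Frobenius, with the unknown Perron vector $\pi$ cancelling in the ratio $y_j^t/x_j^t$) is precisely the standard proof of ratio consensus used in that reference, with the welcome extra care of checking that $x_j^t>0$ for all $t$ so the ratio is well defined.
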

\begin{remark}
Proposition~\ref{lemma_christoforos} proposes a decentralised algorithm with which the exact average is \emph{asymptotically} reached, even if the directed graph is not balanced. 
\end{remark}

\subsection{Finite-Time Exact Ratio Consensus (FTERC)}\label{FTERC}

In what follows, we present a distributed protocol proposed in \cite{themisCDC:2013,themisTCNS:2015} with which each node can compute, based on its own local observations and after a minimal number of steps, the exact average. This protocol is based on the algorithm in Proposition~\ref{lemma_christoforos}, with which every node can compute $\mu_j \triangleq \lim_{t\rightarrow \infty} \mu_j^t$ in a \emph{minimum} number of steps.

\begin{definition}(Minimal polynomial of a matrix pair)
The minimal polynomial associated with the matrix pair $[P,e\T_j]$ denoted by
$q_j(s)=s^{M_j+1}+\sum_{i=0}^{M_j} \alpha^{(j)}_i s^i$ is the monic polynomial of minimum degree $M_j+1$ that
satisfies $e\T_j q_j(P)=0$.
\end{definition}
Considering the iteration in~\eqref{eq:2_1} with weight matrix $P$, it is easy to show (e.g., using the techniques in \cite{2009:Ye}) that
\begin{equation}\label{regression}
\sum _{i=0}^{M_j+1} \alpha^{(j)}_i w_j^{t+i}=0, \quad \forall t \in \mathds{Z}_{+} \; ,
\end{equation}
where $\alpha^{(j)}_{M_j+1}=1$. Let us now denote the $z$-transform of $w_j^t$ as $W_j(z) \defn \Z(w_j^t)$. From \eqref{regression} and the time-shift property of the $z-$transform, it is easy to show (see \cite{2009:Ye,2013:Ye})
\begin{equation} \label{ztranform}
W_j(z)=\frac{\sum _{i=1}^{M_j+1} \alpha^{(j)}_i \sum _{\ell=0}^{i-1}w_j^{\ell} z^{i-\ell}}{q_j(z)} \; ,
\end{equation}
where $q_j(z)$ is the minimal polynomial of $[P,e\T_j]$. If the network is strongly connected, $q_j(z)$ does not have any unstable poles apart from one at $1$; we can then define the following polynomial:
\begin{align}\label{beta}
p_j(z) \triangleq \frac{q_j(z)}{z-1} \triangleq \sum_{i=0}^{M_j} \beta^{(j)}_i z^i\; .
\end{align}

The application of the final value theorem \cite{2009:Ye,2013:Ye} yields:
\begin{subequations}\label{eq:phi}
\begin{align}
\phi_{y}(j)=\lim_{t\rightarrow \infty}y_j^t=\lim_{z\rightarrow 1}(z-1)Y_j(z)=\frac{y_{M_j}\T {\bm \beta}_j}{\mathbb{1}\T {\bm \beta}_j} \; , \label{phi:1} \\
\phi_{x}(j)=\lim_{t\rightarrow \infty}x_j^t=\lim_{z\rightarrow 1}(z-1)X_j(z)=\frac{x_{M_j}\T {\bm \beta}_j}{\mathbb{1}\T {\bm \beta}_j} \; , \label{phi:2}
\end{align}
\end{subequations}
where $y\T_{M_j}=(y_j^0,y_j^1,\ldots,y_j^{M_j})$, $x\T_{M_j}=(x_j^0,x_j^1,\ldots,x_j^{M_j})$
and ${\bm \beta}_j$ is the vector of coefficients of the polynomial $p_j(z)$.

Consider the vectors of $2t+1$ successive discrete-time values at node $v_j$, given by
\begin{align*}
y\T_{2t} &= (y_j^0, y_j^1, \ldots, y_j^{2t} ), \\
x\T_{2t} &= (x_j^0, x_j^1, \ldots, x_j^{2t} ),
\end{align*}
for the two iterations $y_j^t$ and $x_j^t$ at node $v_j$ (as given in iterations \eqref{subeq:1} and \eqref{subeq:2}), respectively.
Let us define their associated Hankel matrices:
\[
\Gamma\{y\T_{2t}\}\triangleq \begin{bmatrix}
y_j^0 & y_j^1 & \ldots & y_j^t \\
y_j^1 & y_j^2 & \ldots & y_j^{t+1} \\
\vdots & \vdots & \ddots & \vdots \\
y_j^t & y_j^{t+1} & \ldots & y_{j}^{2t}
\end{bmatrix},
\]
\[
\Gamma\{x\T_{2t}\}\triangleq \begin{bmatrix}
x_j^0 & x_j^1 & \ldots & x_j^t \\
x_j^1 & x_j^2 & \ldots & x_j^{t+1} \\
\vdots & \vdots & \ddots & \vdots \\
x_j^t & x_j^{t+1} & \ldots & x_{j}^{2t}
\end{bmatrix}.
\]
We also consider the vector of differences between successive values of $y_j^t$
and $x_j^t$:
\[
\overline{y}\T_{2t}=(y_j^1-y_j^0, \ldots, y_j^{2t+1}-y_j^{2t}),
\]
\[
\overline{x}\T_{2t}=(x_j^1-x_j^0, \ldots, x_j^{2t+1}-x_j^{2t}).
\]
It has been shown in \cite{2013:Ye} that ${\bm \beta}_j$ can be computed as the kernel of the first defective Hankel matrices $\Gamma\{\overline{y}\T_{2t}\}$ and $\Gamma\{\overline{x}\T_{2t}\}$ for arbitrary initial conditions $y_0$ and $x_0$ (i.e., ${\bm \beta}_j$ can be calculated as the normalized kernel $ {\bm \beta}_j = \begin{bmatrix} {\bm \beta}_j^0 & {\bm \beta}_j^1 & \ldots & {\bm \beta}_j^{x_{ M_j}-1 } & 1 \end{bmatrix}\T $ of the first defective Hankel matrix $\Gamma\{\overline{y}\T_{2t}\}$), except a set of initial conditions with Lebesgue measure zero.

Next, we provide Theorem~\ref{lem:main} from \cite{themisCDC:2013}, in which it is stated that the exact average $\mu$ among the nodes in a strongly connected digraph can be distributively obtained in a finite number of steps.

\begin{theorem}[\hspace{-0.001cm \cite{themisCDC:2013}}]\label{lem:main}
Consider a strongly connected graph $\mathcal{G}(\mathcal{V}, \mathcal{E})$. Let $y_j^t$ and $x_j^t$ (for all $v_j \in \mathcal{V}$ and $t=0,1,2,\ldots$) be the result of the iterations \eqref{subeq:1} and \eqref{subeq:2}, where  $P = [p_{ji} ] \in \mathbb{R}_{+}^{n\times n}$ is any set of weights that adhere to the graph structure and form a primitive column stochastic weight matrix. Then, the solution to the average consensus can be distributively obtained in finite-time at each node $v_j$, by computing
\begin{align}
\mu_j \triangleq \lim_{t\rightarrow \infty} \frac{y_j^t}{x_j^t} = \frac{\phi_y (j)}{\phi_x (j)}=  \frac{y_{M_j}\T{\bm \beta}_j}{x_{M_j}\T{\bm \beta}_j} \; ,
\end{align}
where $\phi_y (j)$ and $\phi_x (j)$ are given by equations \eqref{phi:1} and \eqref{phi:2}, respectively and ${\bm \beta}_j$ is the vector of coefficients, as defined in \eqref{beta}.
\end{theorem}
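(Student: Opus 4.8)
The plan is to combine the asymptotic guarantee of Proposition~\ref{lemma_christoforos} with the minimal-polynomial and $z$-transform machinery already assembled in \eqref{regression}--\eqref{eq:phi}, and then to observe that every quantity in the claimed closed-form expression depends on only finitely many iterates; this last observation is what upgrades asymptotic convergence to finite-time computation. Note first that, although Proposition~\ref{lemma_christoforos} is stated for the specific weights $p_{lj}=1/(1+\mathcal{D}_j^+)$, the underlying argument is purely a Perron--Frobenius computation: for any primitive column stochastic $P$ one has $\mathbb{1}\T P=\mathbb{1}\T$ and a positive right Perron eigenvector, so $P^t$ converges to a rank-one limit and the ratio $y_j^t/x_j^t$ converges to the network average $\mu_j$. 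Hence the limit $\mu_j=\lim_{t\to\infty} y_j^t/x_j^t$ exists and is correct for the general weight matrix in the theorem's hypothesis.

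Next I would fix a node $v_j$ and work with the scalar sequences $\{y_j^t\}$ and $\{x_j^t\}$. Because $P$ is primitive column stochastic and the graph is strongly connected, the pair $[P,e\T_j]$ has a well-defined minimal polynomial $q_j(s)$ of degree $M_j+1$, each scalar output obeys the finite linear recurrence \eqref{regression}, and taking $z$-transforms with the time-shift property yields the rational forms \eqref{ztranform} with common denominator $q_j(z)$. The structural fact I would then establish is that strong connectivity forces $q_j(z)$ to have a single root at $z=1$ and all remaining roots strictly inside the unit disk, so the deflated polynomial $p_j(z)=q_j(z)/(z-1)$ in \eqref{beta} is well defined and $Y_j(z),X_j(z)$ each have a simple pole at $z=1$ as their only unstable pole --- exactly the hypothesis required to apply the final value theorem.

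I would then apply the final value theorem to obtain $\phi_y(j)=\lim_{z\to1}(z-1)Y_j(z)$ and $\phi_x(j)=\lim_{z\to1}(z-1)X_j(z)$. Substituting \eqref{ztranform}, the factor $(z-1)$ cancels the pole and leaves $N_y(1)/p_j(1)$, where $N_y$ is the triangular numerator of \eqref{ztranform} for the $y$-iteration. Here lies the main obstacle: one must show that evaluating this triangular double sum at $z=1$ collapses to the inner product $y_{M_j}\T{\bm \beta}_j$, while $p_j(1)=\mathbb{1}\T{\bm \beta}_j$. The latter is immediate from the definition of $p_j$; the former follows from the coefficient identities $\alpha^{(j)}_i=\beta^{(j)}_{i-1}-\beta^{(j)}_i$ (with the boundary cases $\alpha^{(j)}_0=-\beta^{(j)}_0$ and $\alpha^{(j)}_{M_j+1}=\beta^{(j)}_{M_j}=1$) induced by the factorization $q_j(z)=(z-1)p_j(z)$, combined with an Abel-summation/telescoping rearrangement of the double sum in \eqref{ztranform}. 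This is the one genuinely computational step and it produces \eqref{phi:1} and \eqref{phi:2} exactly.

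Finally I would take the ratio. Writing $\mu_j=\phi_y(j)/\phi_x(j)$ and cancelling the common normalization $\mathbb{1}\T{\bm \beta}_j$ gives $\mu_j=(y_{M_j}\T{\bm \beta}_j)/(x_{M_j}\T{\bm \beta}_j)$, where the denominator is nonzero because $x_j^t$ converges to a strictly positive limit, so $\phi_x(j)>0$. The finite-time assertion then follows because the right-hand side involves only the first $M_j+1$ iterates of each sequence together with ${\bm \beta}_j$, and ${\bm \beta}_j$ is itself computable after finitely many steps as the normalized kernel of the first defective Hankel matrix $\Gamma\{\overline{y}\T_{2t}\}$ (equivalently $\Gamma\{\overline{x}\T_{2t}\}$), as established in the construction preceding the statement. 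Thus each node recovers the exact average from a finite window of its own observations, completing the argument.
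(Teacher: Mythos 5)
Your proposal is correct and follows essentially the same route as the paper's development: the paper establishes this theorem (citing \cite{themisCDC:2013}) precisely through the minimal-polynomial recurrence \eqref{regression}, the $z$-transform form \eqref{ztranform}, the deflation $p_j(z)=q_j(z)/(z-1)$ justified by strong connectivity, the final value theorem yielding \eqref{phi:1}--\eqref{phi:2}, and the Hankel-kernel computation of ${\bm \beta}_j$ that makes everything finite-time. Your additional details --- the telescoping identity $\alpha^{(j)}_i=\beta^{(j)}_{i-1}-\beta^{(j)}_i$ collapsing the numerator to $y_{M_j}\T{\bm \beta}_j$, and the positivity of $\phi_x(j)$ guaranteeing the ratio is well defined --- are exactly the steps the paper defers to its references \cite{2009:Ye,2013:Ye}, and they check out.
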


Theorem~\ref{lem:main} states that the average consensus in a strongly connected digraph can be computed by the ratio of the final values computed for each of the iterations \eqref{subeq:1} with initial condition $y^0=y_0$ and iteration \eqref{subeq:2} with initial condition $x^0=\mathbb{1}$. Note that $x^0=\mathbb{1}$  does not belong into the Lebesgue measure zero set of matrix $P$ as defined in Proposition~\ref{lemma_christoforos}.

\subsection{$\max-$consensus algorithm}

The $\max-$ consensus algorithm is a simple algorithm for computing the maximum value in a distributed fashion \cite{2008:Cortes}. For any node $v_j\in \mathcal{V}$, the update rule is as follows:
\begin{align}
x_j^{t+1} = \max_{v_i\in \mathcal{N}_j^{-} \cup \{v_j\}}\{ x_i^t \}.
\end{align}
It has been shown (see, e.g., \cite{Silvia:CDC2013,Khatana:2020ACC}) that this algorithm converges to the maximum value among all nodes in a finite number of steps $s$, $s \leq D$.

\subsection{Finite-Time Distributed Termination  (FTDT)}
\label{sec_distributed_terminating_algorithm}

Charalambous and Hadjicostis \cite{charalambous2018stop} proposed a distributed termination mechanism in order to enhance an existing finite-time distributed algorithm to allow the nodes to agree when to terminate their iterations, provided they have all computed their exact average. More specifically, the proposed method is based on the fact that the finite-time consensus algorithm proposed in \cite{themisCDC:2013,themisTCNS:2015} allows nodes in the network running iterations \eqref{subeq:1} and \eqref{subeq:2}  to compute an upper bound of their eccentricity and use this information for deciding when to terminate the process. The procedure is as follows: 
\begin{list4}
\item Once iterations \eqref{subeq:1} and \eqref{subeq:2} are initiated, each node $v_j$ also initiates two counters $c_j$, $c_j[0]=0$, and $r_j$, $r_j[0]=0$. Counter $c_j$ increments by one at every time step, i.e., $c_j[k+1]= c_j[k]+1$. The way counter $r_j$ updates is described next.
\item Alongside iterations \eqref{subeq:1} and \eqref{subeq:2} a $\max$-consensus algorithm is initiated as well, given by
\begin{align}\label{eq:maxconsensus}
\theta_j[k+1] = \max_{v_i \in \mathcal{N}_j \cup \{v_j\}}\big\{ \max\{\theta_i[k],c_i[k]\} \big\}, 
\end{align}
with $\theta_j[0]=0$. Every time step $k$ for which $\theta_j[k+1]=\theta_j[k]$, counter $r_j$ increments by one, but if, however, at any step $k'$, $\theta_j[k'+1] \neq \theta_j[k']$, then $r_j$ is set to zero, i.e., 
\begin{align}\label{eq:rj}
r_j[k+1]=
\begin{cases}
0, & \text{if } \theta_j[k+1] \neq \theta_j[k], \\
r_j[k]+1, & \text{otherwise}.
\end{cases}
\end{align}
\item Once the square Hankel matrices $\Gamma \{ \overline{y}_{M_j}\T \}$ and $\Gamma \{\overline{x}_{M_j}\T \}$ for node $v_j$ lose rank, node $v_j$ saves the count of the counter $c_j$ at that time step, denoted by $k^o_j$, as $c^o_j$, i.e., $c^o_j\triangleq c_j[k^o_j]$, and it stops incrementing the counter, i.e., $\forall k'\geq k^o_j, c[k']=c_j[k^o_j]=c^o_j$. Note that $c^o_j=2(M_j+1)$. 
\item Node $v_j$ can terminate iterations \eqref{subeq:1} and \eqref{subeq:2} when $r_j$ reaches $c^o_j$.
\end{list4}
The main idea of this approach is that $2(M_j +1)$ serves as an upper bound on the maximum distance of any other node to node $v_j$. This quantity  is not known initially, but becomes known to node $v_j$ through the finite-time consensus algorithm \cite{themisTCNS:2015}, and it is used to decide whether all nodes have computed the average and, hence, the node can terminate the iterations.

\subsection{Standard ADMM Algorithm}

	The Standard ADMM algorithm solves the following problem
	\begin{equation}\label{standard_ADMM_obj}
	\begin{aligned}
	\min \, &f(x) + g(z) \\
	\text{s.t.} \, &Ax + Bz = c
	\end{aligned}
	\end{equation}
	for variables $ x \in \mathbb{R}^{p}, z \in \mathbb{R}^{m} $ with matrices $ A \in \mathbb{R}^{q\times p}, B \in \mathbb{R}^{q\times m} $ and vector $ c \in \mathbb{R}^{q} $. Note that $ p\in\mathbb{N}$ and $m \in\mathbb{N}$ represent the dimensions of prime variables. The augmented Lagrangian is
	\begin{equation}\label{augmented_Lagrangian}
	\begin{aligned}
	L_{\rho}(x,z,\lambda) =& f(x) + g(z)+ \lambda^{T}(Ax + Bz - c) \\&+ \frac{\rho}{2}\|Ax + Bz - c\|^{2},
	\end{aligned}
	\end{equation}
	where $ \lambda $ is the Lagrange multiplier and $ \rho >0 $ is a penalty parameter. In ADMM, the primary variables $ x, z $ and the Lagrange multiplier $ \lambda $ are updated as follows: starting from some initial vector $\begin{bmatrix}x^0 & z^0 & \lambda^0 \end{bmatrix}\T $, at each optimization iteration $ k $, 
	\begin{align}
	x^{k+1}=&  \operatorname*{argmin}_x L_{\rho}(x,z^k,\lambda^k), \label{admm_x}\\
	z^{k+1} =& \operatorname*{argmin}_z L_{\rho}(x^{k+1},z,\lambda^k), \label{admm_z}\\
	\lambda^{k+1} =& \lambda^k + \rho (Ax^{k+1} + Bz^{k+1} - c) \label{admm_lamda}.
	\end{align}
	The step-size in the Lagrange multiplier update is the same as the augmented Lagrangian function parameter $ \rho $.

%
%
%
%
\section{Problem Formulation}\label{sec:formulation}

In this work, we consider a strongly connected digraph $\mathcal{G}(\mathcal{V}, \mathcal{E})$ in which each node $v_j \in\mathcal{V}$ is endowed with a scalar cost function $f_i : \mathbb{R}^p \mapsto \mathbb{R}$ assumed to be known to the node only. We assume that each node $v_j$ has knowledge of the number of its out-going links, $\mathcal{D}^{+}_{j}$, and has access to local information only via its communication with the in-neighboring nodes, $\mathcal{N}^{-}_{j}$. The only global information available to all the nodes in the network is given in Assumption~\ref{assup_graph}.

\begin{assumption}\label{assup_graph}
Each node $v_j \in\mathcal{V}$ knows an upper bound on the number of nodes in the network $n'$ (i.e., $n' \geq n$).
\end{assumption}

While Assumption~\ref{assup_graph} is limiting, there exist distributed methods for computing the size of the network; see, for example, \cite{2012:Allerton_Shames}.



The problem is to design a discrete-time coordination algorithm that allows every node $v_j$ in a \emph{digraph} to distributively solve the following optimization problem:
\begin{equation}\label{problem_initial}
\operatorname*{argmin}_{x\in \mathbb{R}^{p}}  \sum_{i=1}^{n} f_i(x),
\end{equation}
where $ x\in \mathbb{R}^{p} $ is a global optimization variable (or a common decision variable). In order to distributively solve the previous problem and  to enjoy the structure ADMM scheme at the same time, a separate decision variable $ x_i $ for node $v_i$ is introduced and the constraint $ x_i=x_j $ is imposed to guarantee that the node decision variables are equal\footnote{This step is quite standard in distributed optimization.}. In other words, problem~\eqref{problem_initial} is reformulated as
\begin{equation}\label{problem_reformulated}
\begin{aligned}
\min \, &\sum_{i=1}^{n} f_i(x_i), \\
\text{s.t.}  \, &x_i = x_j, \forall v_i, v_j\in\mathcal{V}.
\end{aligned}
\end{equation}
Define a closed nonempty convex set $ \mathcal{C} $ as
\begin{equation}\label{setC}
\mathcal{C} = \left\{\begin{bmatrix} x_1\T & x_2\T & \ldots & x_n\T\end{bmatrix}\T \in \mathbb{R}^{np} \, :\,  x_i=x_j \right\} .
\end{equation}
By denoting $ X \coloneqq \begin{bmatrix} x_1\T & x_2\T & \ldots & x_n\T\end{bmatrix}\T $ and making variable $ z \in \mathbb{R}^{np} $ as a copy of vector $ X $, problem~\eqref{problem_reformulated} becomes
\begin{equation}\label{problem_reformulated2}
\begin{aligned}
\min \, & \sum_{i=1}^{n} f_i(x_i), \\
\text{s.t.}  \, & X = z, \, z \in \mathcal{C}. 
\end{aligned}
\end{equation}
Then, take $ g $ as the indicator function of set $ \mathcal{C}  $, and define $ g(z) $ as 
\begin{equation}\label{indicator_function}
g(z) =
\left\{ 
\begin{array}{l}
\begin{aligned}
&0,\quad \text{if} \, z \in \mathcal{C},\\
&\infty, \, \text{otherwise}.
\end{aligned}
\end{array}
\right. 
\end{equation}
Finally, problem~\eqref{problem_reformulated2} is transformed to 
\begin{equation}\label{objective_function}
\begin{aligned}
\min \, & \sum_{i=1}^{n} f_i(x_i) + g(z), \\
\text{s.t.} \, & X - z = 0. 
\end{aligned}
\end{equation}

For notational convenience, denote $ F(X) \coloneqq \sum_{i=1}^{n} f_i(x_i) $. Thus, denote the Lagrangian function as
\begin{equation}\label{Lagrangian}
L(X,z,\lambda) = F(X) + g(z) + \lambda\T(X - z ),
\end{equation}
where $ \lambda $ in $ \mathbb{R}^{np} $ is the Lagrange multiplier associated with the constraint $ X - z =0 $. Then, the following standard assumptions are required for the optimization problem.
\begin{assumption}\label{assup_convex}
	Each cost function $ f_i : \mathbb{R}^{p} \rightarrow \mathbb{R} \cup \{+\infty\} $ is closed, proper and convex.
\end{assumption}
\begin{assumption}\label{assup_saddel_point}
	The Lagrangian $ L(X,z,\lambda) $ has a saddle point, i.e., there exists a solution $ (X^{*},z^{*},\lambda^{*}) $, for which 
	\begin{equation}\label{saddle_point}
	L(X^{*},z^{*}, \lambda)\le L (X^{*},z^{*},\lambda^{*})\le L(X,z,\lambda^{*})
	\end{equation}
	holds for all $ X $ in $ \mathbb{R}^{np} $, $ z $ in $ \mathbb{R}^{np} $ and $ \lambda $ in $ \mathbb{R}^{np} $.
\end{assumption}

Assumption~\ref{assup_convex} allows $ f_i $ to be non-differentiable~\cite{boyd2011distributed}. By Assumptions~\ref{assup_convex}-\ref{assup_saddel_point} and based on the definition of $ g(z) $ in~\eqref{indicator_function}, $ L(X,z,\lambda^{*}) $ is convex in $ (X,z) $ and $ (X^{*},z^{*}) $ is a solution to problem~\eqref{objective_function}~\cite{boyd2011distributed,wei2012distributed}.


%
%
%
%
\section{Main results}\label{sec_mainResult}
\label{sec:results}

At iteration $ k $, the corresponding augmented Lagrangian of optimization problem~\eqref{objective_function} is written as
\begin{align}\label{augmented_Lagrangian2}
L_{\rho}&(X^{k},z^{k},\lambda^{k}) \\
=& \sum_{i=1}^{n} f_i(x_i^{k}) + g(z^{k}) + {\lambda^{k}}\T(X^{k} - z^{k} )+ \frac{\rho}{2}\|X^{k} - z^{k} \|^{2} \nonumber \\
=&  \sum_{i=1}^{n}\left( f_i(x_i^{k}) + {\lambda_i^{k}}\T(x_i^{k} - z_i^{k}) + \frac{\rho}{2}\|x_i^{k} - z_i^{k} \|^{2} \right) +g(z^{k}) , \nonumber
\end{align}
where $ z_i\in \mathbb{R}^{p}  $ is the $ i-$th element of vector $ z $. By ignoring terms which are independent of the minimization variables (i.e., $ x_i, z $), for each node $ v_i $, the standard ADMM updates \eqref{admm_x}-\eqref{admm_lamda} change to the following format:
\begin{align}
x_i^{k+1} =&  \operatorname*{argmin}_{x_i} f_i(x_i) + {\lambda_i^{k}}\T x_i + \frac{\rho}{2}\|x_i - z_i^{k} \|^{2}, \label{dadmm_x}\\
z^{k+1} =& \operatorname*{argmin}_z g(z) + {\lambda^{k}}\T(X^{k+1} - z )  + \frac{\rho}{2}\|X^{k+1} - z \|^{2}\nonumber \\
=& \operatorname*{argmin}_z g(z) +\frac{\rho}{2}\|X^{k+1} - z + \frac{1}{\rho}\lambda^{k} \|^{2}, \label{dadmm_z}\\
\lambda_i^{k+1} =& \lambda_i^{k} + \rho (x_i^{k+1} - z_i^{k+1}) \label{dadmm_lamda},
\end{align}
where the last term in~\eqref{dadmm_z} comes from the identity $ 2a^Tb + b^2=(a+b)^2-a^2 $ with $ a = \lambda^{k}/\rho $ and $ b = X^{k+1} - z $.

Update~\eqref{dadmm_x} for $ x_i^{k+1} $ can be solved by a classical method, e.g., the proximity operator~\cite[Section 4]{boyd2011distributed}. Update~\eqref{dadmm_lamda} for the dual variable $ \lambda_i^{k+1} $  can be implemented trivially by node $ v_i $. 
Note that both updates can be done independently by node $ v_i $. 
Since $ g $ is the indicator function of the closed nonempty convex set $ \mathcal{C}  $,  update~\eqref{dadmm_z} for $ z^{k+1} $ becomes 
$$ 
z^{k+1} = \Pi_{\mathcal{C}}(X^{k+1}+ \lambda^{k}/\rho) , 
$$
where $ \Pi_{\mathcal{C}} $ denotes the projection (in the Euclidean norm) onto $ \mathcal{C} $.
Intuitively, from \eqref{dadmm_z} and the definition of $ g(z) $ in~\eqref{indicator_function}, one can see that the elements of $ z $ (i.e., $ z_1, z_2, \ldots, z_n $)  should go into $ \mathcal{C} $ in finite time. If not, one will have $  g(z)  = \infty $ and update~\eqref{dadmm_z} will never be finished. Then, from the definition of $ \mathcal{C} $ in~\eqref{setC}, one can see that $ z $ going into $ \mathcal{C} $ means $  z_1= z_2= \ldots = z_n $, which is in the mathematical format of consensus. Therefore, if each node $v_i\in\mathcal{V}$ can have $z_i$ reach $ \frac{1}{n} \sum_{i=1}^{n}z_i(0) $ in a finite number of steps, with $ z_i(0) = x_i^{k+1} + \lambda_i^{k}/\rho  $, then the update can be completed. Therefore, update~\eqref{dadmm_z} reduces to a finite time consensus problem. For this reason, we adopt the finite time exact ratio consensus (FTERC) algorithm for digraphs, as introduced in Section~\ref{FTERC}.

\subsection{FTERC given network size upper bound~\cite{ECC2021:Wei-Themis}}
\label{sec:D-ADMM-FTERC}

Herein, we describe the algorithm we proposed in~\cite{ECC2021:Wei-Themis}. For this algorithm, we assume that all nodes are aware of an upper bound of the size of the network $n'$ (i.e., $n' \geq n$ and $n'$ is known to all nodes), the augmented Lagrangian function parameter $\rho$, and the ADMM maximum  optimization step $k_{\max}$. The optimization consists of the following steps:
\begin{list4}
\item[1)] At the first optimization step, node $v_i\in\mathcal{V}$ computes  $x_i^{1}$ using~\eqref{dadmm_x}, computes $z_i^{1}$ via FTERC which runs for $2n'$ iterations. By that time, it is guaranteed that each node has computed their final value $z_i^{1}$, which requires computing $ {\bm \beta}_i $ and as a consequence $M_i$ is determined. Then, using
 $x_i^{1}$ and $z_i^{1}$, it computes $\lambda_i^{1}$ using~\eqref{dadmm_lamda}.
\item[2)] At the second optimization step, node $v_i\in\mathcal{V}$, computes  $x_i^{2}$ using~\eqref{dadmm_x}, runs ratio consensus~\eqref{eq:1} for $n'$ iterations and computes $z_i^{2}$ with the same $ {\bm \beta}_i $ computed at the first optimization step (i.e., there is no need to compute the defective Hankel matrices again). At the same time it runs a $\max-$consensus algorithm with initial condition $x_i^0  = M_i+1$. Note that $M_i+1 < n \leq n'$ and that the $\max-$consensus algorithm converges in $s$ iterations ($s\leq D \leq n-1<n'$). Hence, at this step node $v_i$, not only computes $z_i^{2}$, but also the maximum number of iterations needed $t_{\max} \coloneqq M_{\max}+1$ by each node $v_i\in\mathcal{V}$ for every optimization step $k$ to compute their $z_i^{k+1}$. Again, using $x_i^{2}$ and $z_i^{2}$, it computes $\lambda_i^{2}$ using~\eqref{dadmm_lamda}.
\item[3)] At every optimization step thereafter, each node $v_i\in\mathcal{V}$ computes  $x_i^{k+1}$ using~\eqref{dadmm_x}, computes $z_i^{k+1}$ via ratio consensus~\eqref{eq:1} with the same $ {\bm \beta}_i $ which runs for $t_{\max}$ iterations.
\item[4)] The ADMM algorithm terminates once the stopping criterion\footnote{Primal and dual feasibility conditions in~\cite{boyd2011distributed}.} is satisfied or the maximum number of optimization steps, $k_{\max}$ is reached. 
\end{list4}

The algorithm guarantees that the number of iterations needed at every optimization step $k$, $k\geq 2$ is the \emph{minimum} (see properties of FTERC) and that the solution at every step is the \emph{exact} optimal. Fig.~\ref{FTERC_structure} shows the number of iterations needed at every optimization step. 
\begin{figure}[h!]
	\centering
\includegraphics[width=\columnwidth]{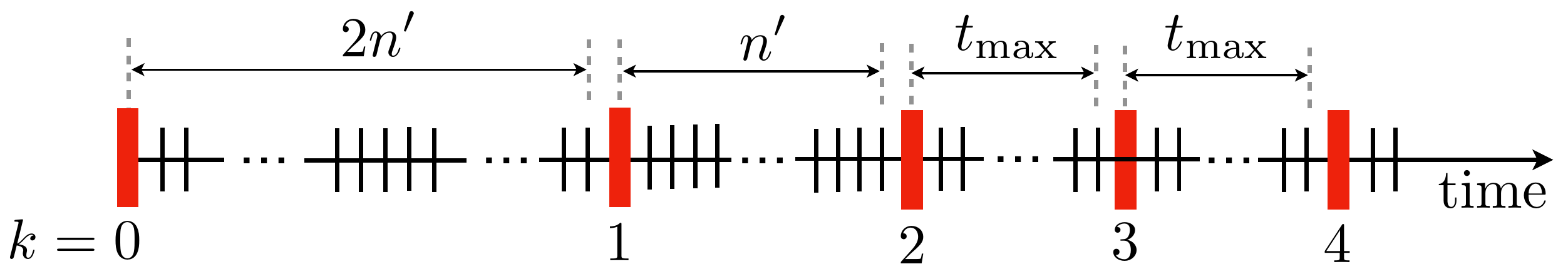}
\caption{The FTERC is terminated after $2n'$ iterations in the first step of the ADMM optimization ($ n' $ is an upper bound of $ n $ known to all nodes). During the first optimization step, each node $v_j$ computes $M_j$. During the second step, they run a max-consensus algorithm (which converges in $s$ ($s\leq D$) iterations) and they determine $M_{\max}$. The second optimization step is terminated after $n'$ iterations. Thereafter, each optimization step is changed to $t_{\max} \coloneqq M_{\max}+1$ iterations.}
\label{FTERC_structure}
\end{figure}

Algorithm~\ref{algorithm:1} provides D-ADMM-FTERC, the distributed ADMM algorithm proposed in~\cite{ECC2021:Wei-Themis} for solving optimization problem~\eqref{objective_function}.

\begin{algorithm}[t!]
	\caption{D-ADMM-FTERC.}
	\begin{algorithmic}[1]
		\STATE \textbf{Input:} $\mathcal{G}(\mathcal{V}, \mathcal{E})$, $\rho >0$, $n'$ (upper bound on $n$), $ k_{\max} $ (ADMM maximum number of iterations)
		\STATE \textbf{Data:} Node $ v_i \in \mathcal{V}$ sets $ x_i^0, z_i^0, \lambda_i^0 $ randomly, and $k=0$
		\newline
		\STATE Node $ v_i \in \mathcal{V}$ does the following:
		\WHILE {$ k \le k_{\max} $ }
		\STATE Compute $x_i^{k+1}$ using Eq.~\eqref{dadmm_x}
		\IF {$ k = 0 $} 
		\STATE Compute $z_i^{1}$ via FTERC which runs for $2n'$ steps, and determine $M_i$ and $ {\bm \beta}_i $
		\ELSIF {$ k = 1 $} 
		\STATE Run $\max-$consensus and ratio consensus~\eqref{eq:1}: determine $M_{\max}$ (via $\max-$consensus),  compute $z_i^{2}$ with the same $ {\bm \beta}_i $ and terminate iterations after $n'$ steps
		\ELSE {}
		\STATE Compute $z_i^{k+1}$ via ratio consensus~\eqref{eq:1} with the same $ {\bm \beta}_i $ (runs for $t_{\max}$)
		\ENDIF
		\STATE Compute $\lambda_i^{k+1}$ using Eq.~\eqref{dadmm_lamda}
		\IF{ADMM stopping criterion is satisfied}
		\STATE Stop D-ADMM-FTERC
		\ENDIF
		\STATE $k\leftarrow k+1$ 
		\ENDWHILE
	\end{algorithmic}
	\label{algorithm:1}
\end{algorithm}

\subsection{Fully distributed FTERC}

In this subsection, we propose a \emph{fully-distributed} FTERC algorithm {using FTDT in Section~\ref{sec_distributed_terminating_algorithm}}, in which no information about the network size is known. The changes with respect to D-ADMM-FTERC described in Section~\ref{sec:D-ADMM-FTERC} are steps 1) and 2).  More specifically, step 2) is completely omitted, whereas step 1) is replaced by the following step: At the first optimization step, node $v_i\in\mathcal{V}$ computes  
\begin{list4}
\item $x_i^{1}$ using~\eqref{dadmm_x},  
\item $z_i^{1}$ via {FTDT} which runs for $ t_1 \coloneqq 4(M_{\max}+1) -1 $ iterations \emph{for all nodes} (as shown in Fig.~\ref{FTERC_structure_2}). During the first $t_1$ iterations, node $v_i$ will terminate at step $t_{0,i}\leq t_1$, given by
\begin{align}\label{eq:t0i}
t_{0,i} \coloneqq \max_{v_j\in \mathcal{N}}\{2(M_j+1)\} + 2(M_i+1)-1 ,
\end{align}
as it is the case for FTERC in Section~\ref{sec:D-ADMM-FTERC}. We observe that, by $t_{0,i}$ node $v_i$ \emph{(i)} has computed their final value $z_i^{1}$, which requires computing $ {\bm \beta}_i $, and as a consequence $M_i$ is determined, and \emph{(ii)} $  M_{\max} \coloneqq \max_{v_i\in \mathcal{N}}M_i $ can be determined by \eqref{eq:t0i}
\begin{align}
M_{\max} = \frac{t_{0,i}-2M_i-1}{2} -1.
\end{align}
As a consequence, node will terminate its iteration at $t_{0,i}$ and it will know that by $t_1$ all the nodes have terminated their iterations.
\item $\lambda_i^{1}$ using $x_i^{1}$ and $z_i^{1}$ in~\eqref{dadmm_lamda}.
\end{list4}
Note that this process is for synchronizing of all nodes to start the next ADMM iteration simultaneously. Thereafter, each ADMM optimization step is changed to $t_{\max}$. Note also that, as it is the case with Algorithm~\ref{algorithm:1}, since the nodes have computed $M_{\max}$ already, they will be able to determine $t_{\max}$ and this is what they do from the optimization step 1 onwards (something that in Algorithm~\ref{algorithm:1} it starts from optimization step 2 onwards; see Fig~\ref{FTERC_structure_2}.  
\begin{figure}[h!]
	\centering
	\includegraphics[width=\columnwidth]{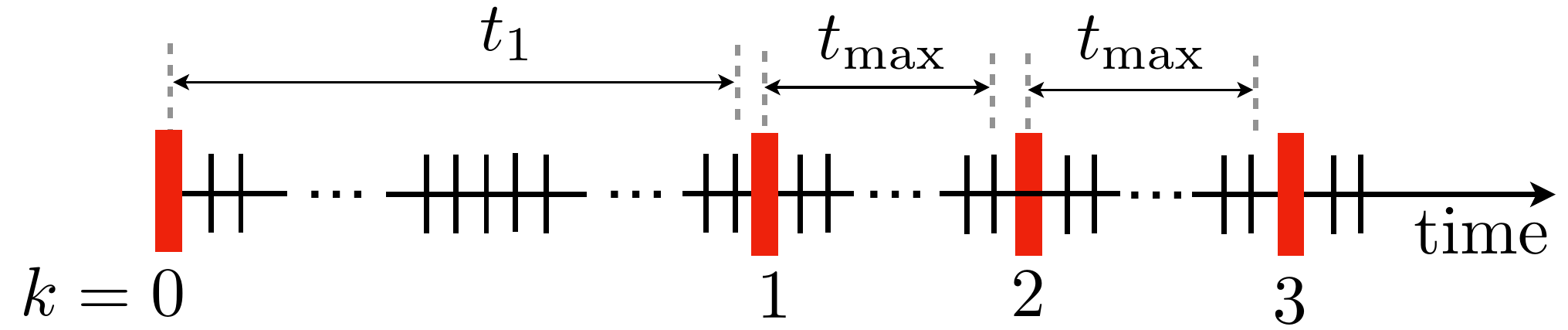}
	\caption{The finite-time consensus algorithm is terminated after $t_1$ iterations in the first step of the ADMM optimization. During the first optimization step, each node $v_i$ computes $M_i$. Note that inside this process, each node $v_i$ can compute the value of $M_i$ using  FTDT in Section~\ref{sec_distributed_terminating_algorithm} at step $ 2(M_i+1) -1 $; then, at step $ t_{0,i} $, the value of $ M_{\max}$ can be calculated. After step $ t_{0,i} $, each node $v_i$ stops running the ratio consensus process until everybody terminates at step $ t_1$. }
	\label{FTERC_structure_2}
\end{figure}

We stress that the algorithm guarantees that the number of iterations needed at every optimization step $k$, $k\geq 1$ is the \emph{minimum} (see properties of {FTDT}) and that the solution at every step is the \emph{exact} optimal.

We now formally describe our algorithm, herein called \emph{Algorithm~\ref{algorithm:2}},  in which the nodes distributively solve optimization problem~\eqref{objective_function}. 

\begin{algorithm}[t!]
	\caption{FD-ADMM-FTDT.}
	\begin{algorithmic}[1]
		\STATE \textbf{Input:} $\mathcal{G}(\mathcal{V}, \mathcal{E})$, $\rho >0$, $ k_{\max} $ (ADMM maximum number of iterations)
		\STATE {\textbf{Initialization:}} Node $ v_i \in \mathcal{V}$ sets $ x_i^0, z_i^0, \lambda_i^0 $ randomly, and $k=0$
		\newline
		\STATE Node $ v_i \in \mathcal{V}$ does the following:
		\WHILE {$ k \le k_{\max} $ }
		\STATE Compute $x_i^{k+1}$ using Eq.~\eqref{dadmm_x}
		\IF {$ k = 0 $} 
		\STATE Compute $z_i^{1}$ via {FTDT} which runs for $t_1$ steps, and determine $M_i$, $ {\bm \beta}_i $ and $ M_{\max} $ (or $ t_{\max} $).
		\ELSE {}
		\STATE Compute $z_i^{k+1}$ via ratio consensus~\eqref{eq:1} with the same $ {\bm \beta}_i $ (runs for $t_{\max}$)
		\ENDIF
		\STATE Compute $\lambda_i^{k+1}$ using Eq.~\eqref{dadmm_lamda}
		\IF{ADMM stopping criterion is satisfied}
		\STATE Stop FD-ADMM-FTDT
		\ENDIF
		\STATE $k\leftarrow k+1$ 
		\ENDWHILE
	\end{algorithmic}
	\label{algorithm:2}
\end{algorithm}

\begin{remark}
The main difference between Algorithm~\ref{algorithm:1} and \ref{algorithm:2} is that Algorithm~\ref{algorithm:2} does not require the information of the upper bound of the size of the network $n'$. This is especially helpful and significant when dealing with large-scale multi-node optimization problems.
\end{remark}

\begin{remark}
For {both} algorithms (Algorithms~\ref{algorithm:1} and~\ref{algorithm:2}), the exact finite-time iteration number $ t_{\max} $ is determined theoretically. On the contrary, for  finite-time $ \epsilon $ consensus in~\cite{khatana2020d}, the iteration number cannot be decided by theory and also varies depending on the value of $ \epsilon $.
\end{remark}

%
%
%
%
\section{Convergence analysis}\label{sec:convergence_analysis}
In this section, the following well-know identity is used frequently:
\begin{align}
(a_1&-a_2)\T(a_3-a_4) =\frac{1}{2}(\|a_1-a_4\|^2- \|a_1-a_3\|^2 ) \label{equality_law}\\
&+\frac{1}{2}(\|a_2-a_3\|^2- \|a_2-a_4\|^2 ), \forall a_1,a_2,a_3,a_4 \in \mathbb{R}^p.\nonumber
\end{align}
\subsection{$O(1/k)$ converge rate}

In this section, the $ O(1/k) $ convergence rate of our proposed D-ADMM-FTERC{/FD-ADMM-FTDT} algorithm{s} will be presented. The analysis is inspired by the analyses in \cite{wei2012distributed} and \cite{khatana2020d}. Authors in \cite{wei2012distributed} analyzed the D-ADMM based on the assumption of an underlying undirected graph. A D-ADMM approach for directed graphs is proposed in \cite{khatana2020d} based on a finite-time ``approximate'' consensus method, which means the resulted solution to problem~\eqref{objective_function} will not be optimal, but close to the optimal solution $ (X^{*},z^{*}) $. By using the FTERC{/FTDT} method{s} presented in the previous section, 
we will prove our D-ADMM-FTERC{/FD-ADMM-FTDT} algorithm{s} for the digraph being able 
to get the exact optimal solution $ (X^{*},z^{*}) $ in the following theorem.
\begin{theorem}\label{theorem_convergence}
	Let $ \{X^k, z^k, \lambda^k\} $ be the iterates from our Algorithms~\ref{algorithm:1} and~\ref{algorithm:2} for problem~\eqref{objective_function}, where $ X^k = [{x_1^{k}}\T,{x_2^{k}}\T, \ldots, {x_n^{k}}\T]\T $ and $ \lambda^k = [{\lambda_1^{k}}\T,{\lambda_2^{k}}\T, \ldots, {\lambda_n^{k}}\T]\T $. Let $ \bar{X}^k = \frac{1}{k} \sum_{s=0}^{k-1}X^{s+1}, \bar{z}^k = \frac{1}{k} \sum_{s=0}^{k-1}z^{s+1} $ be respectively the ergodic average of $ X^k $.
	Considering a strongly connected communication graph, 
	under Assumptions \ref{assup_graph}-\ref{assup_saddel_point}, the following relationship holds for any iteration $ k $ as
	\begin{equation}\label{convergence_relationship}
	\begin{aligned}
	0&\le L(\bar X^{k},\bar z^{k}, \lambda^{*})- L ( X^{*},z^{*},\lambda^{*}) \\
	&\le \frac{1}{k}\left( \frac{1}{2\rho} \|\lambda^{*}-\lambda^{0}\|^2 + \frac{\rho}{2}\|X^{*}-z^{0}\|^2\right).
	\end{aligned}
	\end{equation}
\end{theorem}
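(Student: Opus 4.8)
\emph{Proof proposal.} The plan is to follow the classical ergodic ADMM analysis of \cite{wei2012distributed}, exploiting the crucial fact that because FTERC/FTDT returns the \emph{exact} projection onto $\mathcal{C}$, the $z$-update \eqref{dadmm_z} coincides exactly with the centralized ADMM $z$-step; hence the standard per-iteration estimates hold with equality rather than up to the consensus error that burdens the approximate scheme of \cite{khatana2020d}. The left inequality $0\le L(\bar X^{k},\bar z^{k},\lambda^{*})-L(X^{*},z^{*},\lambda^{*})$ is immediate from the saddle-point property of Assumption~\ref{assup_saddel_point}, which gives $L(X^{*},z^{*},\lambda^{*})\le L(X,z,\lambda^{*})$ for every $(X,z)$; evaluating at $(X,z)=(\bar X^{k},\bar z^{k})$ yields the claim. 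The entire difficulty is therefore concentrated in the right inequality.

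For the upper bound I would first extract the first-order optimality conditions of the two minimizations. From \eqref{dadmm_x} there is a subgradient $s_X\in\partial F(X^{k+1})$ with $s_X+\lambda^{k}+\rho(X^{k+1}-z^{k})=0$, and from \eqref{dadmm_z} a subgradient $s_z\in\partial g(z^{k+1})$ with $s_z-\lambda^{k}-\rho(X^{k+1}-z^{k+1})=0$. Using the dual update \eqref{dadmm_lamda}, namely $\lambda^{k+1}=\lambda^{k}+\rho(X^{k+1}-z^{k+1})$, these reduce to $s_X=-\lambda^{k+1}-\rho(z^{k+1}-z^{k})$ and $s_z=\lambda^{k+1}$. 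I would then invoke convexity of $F$ and $g$ (Assumption~\ref{assup_convex}) to write $F(X^{*})-F(X^{k+1})\ge\langle s_X,X^{*}-X^{k+1}\rangle$ and $g(z^{*})-g(z^{k+1})\ge\langle s_z,z^{*}-z^{k+1}\rangle$, add the two, and re-express the left-hand side as $L(X^{k+1},z^{k+1},\lambda^{*})-L(X^{*},z^{*},\lambda^{*})$, using both the primal feasibility $X^{*}=z^{*}$ of the optimum and the residual identity $X^{k+1}-z^{k+1}=\frac{1}{\rho}(\lambda^{k+1}-\lambda^{k})$.

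The heart of the argument is then purely algebraic: substituting $X^{k+1}=z^{k+1}+\frac{1}{\rho}(\lambda^{k+1}-\lambda^{k})$ and applying the polarization identity \eqref{equality_law} repeatedly converts each inner product into a telescoping difference of squared norms, producing the one-step estimate
\begin{equation*}
L(X^{k+1},z^{k+1},\lambda^{*})-L(X^{*},z^{*},\lambda^{*})\le\frac{1}{2\rho}\big(\|\lambda^{*}-\lambda^{k}\|^{2}-\|\lambda^{*}-\lambda^{k+1}\|^{2}\big)+\frac{\rho}{2}\big(\|z^{*}-z^{k}\|^{2}-\|z^{*}-z^{k+1}\|^{2}\big).
\end{equation*}
The step I expect to be the main obstacle is disposing of the leftover cross terms $-\frac{1}{2\rho}\|\lambda^{k+1}-\lambda^{k}\|^{2}-\frac{\rho}{2}\|z^{k+1}-z^{k}\|^{2}-\langle z^{k+1}-z^{k},\lambda^{k+1}-\lambda^{k}\rangle$, which must be shown to be nonpositive; this follows by completing the square to $-\frac{1}{2}\|\frac{1}{\sqrt{\rho}}(\lambda^{k+1}-\lambda^{k})+\sqrt{\rho}(z^{k+1}-z^{k})\|^{2}\le 0$. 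Finally I would sum the one-step estimate over $s=0,\dots,k-1$ so that the right-hand side telescopes to $\frac{1}{2\rho}\|\lambda^{*}-\lambda^{0}\|^{2}+\frac{\rho}{2}\|z^{*}-z^{0}\|^{2}$ (discarding the nonnegative trailing terms), divide by $k$, and apply Jensen's inequality to the jointly convex map $(X,z)\mapsto L(X,z,\lambda^{*})$ to pull the averaging inside, giving $L(\bar X^{k},\bar z^{k},\lambda^{*})-L(X^{*},z^{*},\lambda^{*})\le\frac{1}{k}\sum_{s=0}^{k-1}[L(X^{s+1},z^{s+1},\lambda^{*})-L(X^{*},z^{*},\lambda^{*})]$. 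Rewriting $\|z^{*}-z^{0}\|$ as $\|X^{*}-z^{0}\|$ via $z^{*}=X^{*}$ then yields exactly \eqref{convergence_relationship}.
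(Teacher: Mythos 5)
Your proposal is correct and follows essentially the same route as the paper's proof: saddle-point property for the lower bound, then subgradient optimality conditions, the polarization identity \eqref{equality_law}, a telescoping sum, and Jensen's inequality for the upper bound. Your only deviations are cosmetic reorganizations of the same algebra --- you use the equality form of the optimality conditions instead of the paper's variational-inequality form, and you substitute the residual identity before polarizing and then complete the square, whereas the paper polarizes first and cancels via the dual update; in fact your completed square $-\frac{1}{2}\bigl\|\frac{1}{\sqrt{\rho}}(\lambda^{k+1}-\lambda^{k})+\sqrt{\rho}(z^{k+1}-z^{k})\bigr\|^{2}$ equals exactly the paper's discarded term $-\frac{\rho}{2}\|X^{k+1}-z^{k}\|^{2}$.
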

\begin{proof}
	From the second inequality of the saddle point of Lagrangian function~\eqref{saddle_point}, the first inequality in~\eqref{convergence_relationship} can be proved directly.
	
	We now prove the second inequality in~\eqref{convergence_relationship}. For each node $ v_i $, since $ x_i^{k+1} $ minimizes $ L_{\rho}(x,z^k, \lambda^k) $ in~\eqref{dadmm_x}, by the optimal condition, we have 
	\begin{equation}
	\begin{aligned}
	(x- x_i^{k+1})\T[h_i(x_i^{k+1}) + \lambda_i^k + \rho (x_i^{k+1} - z_i^k) ] \ge 0,
	\end{aligned}
	\end{equation}
	where $ h_i(x_i^{k+1}) $ is the sub-gradient of $ f_i $ at $ x_i^{k+1} $. By integrating $ x_i^{k+1} = (\lambda_i^{k+1}-\lambda_i^k)/\rho + z_i^{k+1} $ from~\eqref{dadmm_lamda} into the above inequality, we have
	\begin{equation}
	(x- x_i^{k+1})\T[h_i(x_i^{k+1}) + \lambda_i^{k+1} + \rho (z_i^{k+1} - z_i^k) ] \ge 0.
	\end{equation}
	The compact mathematical format of the above $ n $ inequalities can be written as
	\begin{equation}\label{optimal_condition_X}
	(X- X^{k+1})\T[\bar h(X^{k+1}) + \lambda^{k+1} + \rho (z^{k+1} - z^k) ] \ge 0,
	\end{equation}
	where $ \bar h(X^{k+1}) = [h_1\T(x_1^{k+1}), \ldots, h_n\T(x_n^{k+1})]\T $. Since $ z^{k+1} $ minimizes $ L_{\rho}(x^{k+1},z, \lambda^k) $ in~\eqref{dadmm_z}, similarly, we have
	\begin{equation}\label{optimal_condition_z}
	\begin{aligned}
	(z- &z^{k+1})\T[\bar g(z^{k+1}) - \lambda^{k} - \rho (X^{k+1} - z^{k+1}) ]\\
	&=(z- z^{k+1})\T (\bar g(z^{k+1}) - \lambda^{k+1}) \ge 0
	\end{aligned}
	\end{equation}
	for all $ z \in \mathcal{C} $,
	where $ \bar g(z^{k+1}) $ is the sub-gradient of $ g $ at $ z^{k+1} $. As both $ F $ and $ g $ are convex, by utilizing the sub-gradient inequality, we get 
	\begin{align}
	F(&X^{k+1}) - F(X) + g(z^{k+1}) - g(z) \nonumber\\
	\le& -(X- X^{k+1})\T\bar h(X^{k+1}) - (z- z^{k+1})\T\bar g(z^{k+1})\nonumber \\
	\le& \lambda^{(k+1)T} [X- X^{k+1} -(z- z^{k+1})] \nonumber\\
	&+ \rho (X- X^{k+1})\T(z^{k+1} - z^k),\label{proof1}
	\end{align}
	where the last inequality comes from \eqref{optimal_condition_X} and \eqref{optimal_condition_z}. 
	Due to feasibility of the optimal solution $ (X^{*}, z^{*}) $, we obtain $ X^{*}- z^{*}=0 $.
	By setting $ X=X^{*}, z=z^{*}  $, \eqref{proof1} becomes
	\begin{align}
	F(&X^{k+1}) - F(X^{*}) + g(z^{k+1}) - g(z^{*})\label{proof2}\\
	\le& \lambda^{{(k+1)}\T} (z^{k+1}- X^{k+1} ) + \rho (X^{*}- X^{k+1})\T(z^{k+1} - z^k).\nonumber
	\end{align}
	Adding the term $ {\lambda^{*}}\T( X^{k+1}-z^{k+1} ) $ to both sides of \eqref{proof2}, 
	{\small
	\begin{equation}\label{proof3}
	\begin{aligned}
	F(&X^{k+1}) - F(X^{*}) + g(z^{k+1}) - g(z^{*}) + {\lambda^{*}}\T( X^{k+1}-z^{k+1})\\
	\le& (\lambda^{*}-\lambda^{k+1})\T (X^{k+1}-z^{k+1} ) + \rho (X^{*}- X^{k+1})\T(z^{k+1} - z^k)
	\\
	\le&\frac{1}{\rho}(\lambda^{*}-\lambda^{k+1})\T(\lambda^{k+1}-\lambda^{k} ) + \rho (X^{*}- X^{k+1})\T(z^{k+1} - z^k),
	\end{aligned}
	\end{equation}}where the last equality is calculated from~\eqref{dadmm_lamda}. 
	Then, by using equality~\eqref{equality_law}, \eqref{proof3} changes to
	\begin{align}
	F(&X^{k+1}) - F(X^{*}) + g(z^{k+1}) - g(z^{*}) + {\lambda^{*}}\T( X^{k+1}-z^{k+1}) \nonumber\\
	\le&\frac{1}{2\rho}(\|\lambda^{*}-\lambda^{k}\|^2- \|\lambda^{*}-\lambda^{k+1}\|^2 
	+\|\lambda^{k+1}-\lambda^{k+1}\|^2\nonumber\\&- \|\lambda^{k+1}-\lambda^{k}\|^2 ) + \frac{\rho}{2} (\|X^{*}-z^k\|^2- \|X^{*}-z^{k+1}\|^2\nonumber\\& +\| X^{k+1}-z^{k+1}\|^2- \| X^{k+1}-z^k\|^2 )\nonumber\\
	\le & \frac{1}{2\rho}(\|\lambda^{*}-\lambda^{k}\|^2- \|\lambda^{*}-\lambda^{k+1}\|^2 )\nonumber\\&+\frac{\rho}{2} (\|X^{*}-z^k\|^2- \|X^{*}-z^{k+1}\|^2),\label{proof4}
	\end{align}
	where the last inequality comes from using~\eqref{dadmm_lamda} and dropping the negative term $ -\frac{\rho}{2}\| X^{k+1}-z^k\|^2 $. Now, by using $ s \coloneqq k $, we change~\eqref{proof4} to another format as
	\begin{align}
	F(&X^{s+1}) - F(X^{*}) + g(z^{s+1}) - g(z^{*}) + {\lambda^{*}}\T( X^{s+1}-z^{s+1})\nonumber\\
	\le & \frac{1}{2\rho}(\|\lambda^{*}-\lambda^{s}\|^2- \|\lambda^{*}-\lambda^{s+1}\|^2 )\nonumber\\&+\frac{\rho}{2} (\|X^{*}-z^s\|^2- \|X^{*}-z^{s+1}\|^2),\label{proof5}
	\end{align}
	which holds true for all $ s $. By summing~\eqref{proof5} over $ s=0,1,\ldots, k-1 $ and after telescoping calculation, we have
	\begin{equation}\label{proof6}
	\begin{aligned}
	\sum_{s=0}^{k-1}F(&X^{s+1}) - kF(X^{*}) + \sum_{s=0}^{k-1}g(z^{s+1}) - kg(z^{*}) \\&+ {\lambda^{*}}\T\sum_{s=0}^{k-1}( X^{s+1}-z^{s+1})\\
	\le & \frac{1}{2\rho}(\|\lambda^{*}-\lambda^{0}\|^2- \|\lambda^{*}-\lambda^{k}\|^2 )\\&+\frac{\rho}{2} (\|X^{*}-z^0\|^2- \|X^{*}-z^{k}\|^2).
	\end{aligned}
	\end{equation}
	Due to the convexity of both $ F $ and $ g $, we get $ kF(\bar X^k) \le \sum_{s=0}^{k-1}F(X^{s+1}) $ and $ kg(\bar z^k) \le \sum_{s=0}^{k-1}g(z^{s+1}) $. Thus, utilizing the definition of $ \bar X^k, \bar z^k $ and dropping the negative terms, we have
	\begin{align}
	kF(\bar X^k) &- kF(X^{*}) + kg(\bar z^k) - kg(z^{*}) + {\lambda^{*}}\T( k\bar X^k-k\bar z^k)\nonumber\\
	\le & \frac{1}{2\rho}\|\lambda^{*}-\lambda^{0}\|^2+\frac{\rho}{2} \|X^{*}-z^0\|^2.\label{proof7}
	\end{align}
	Based on $ X^{*}- z^{*}=0 $, \eqref{proof7} combined with the definition of Lagrangian function [cf.~\eqref{Lagrangian}] prove~\eqref{convergence_relationship}.
\end{proof}
\begin{remark}\label{remark_comparison}
	The convergence proof here is basically different from the ones in~\cite{wei2012distributed} and \cite{khatana2020d} as the investigated problems are different. Specifically, in~\cite{wei2012distributed}, the proposed D-ADMM can be only applied to nodes with undirected graphs as the constraint $ AX=0 $ is needed to minimize the objective function~\eqref{problem_reformulated}, where the matrix $ A $ is related to the communication graph structure which must be undirected. To apply D-ADMM for digraphs, authors in \cite{khatana2020d} proposed a different constraint which is $ \| x_i-x_j \|\le \varepsilon, \varepsilon>0 $ with the value of $ \varepsilon  $ predefined. Note that the above constraint will inevitably lead to a sub-optimal solution, which is close to the optimal but not the exact optimal as we can see from the comparisons in Section~\ref{sec:examples}.  In this paper, we propose the constraint $ x_i=x_j $ to guarantee the solution is optimal by using the FTERC{/FTDT} algorithm{s}.
\end{remark}


\subsection{Linear convergence rate}

\begin{assumption}\label{assump_Lipschitz_differentiable}
	Function $ f_i(\cdot) $ in~\eqref{problem_initial} is continuously differentiable.
\end{assumption}
\begin{assumption}\label{assump_strongly_convex}
	$ F(X) \coloneqq \sum_{i=1}^{n} f_i(x_i) $ is a strongly convex with $ \mu > 0 $, i.e.,
	\begin{equation}\label{important_inequality0}
	\begin{aligned}
	\langle \nabla \bar{F}(X)-\nabla \bar{F}(Y), X-Y \rangle \ge \mu\| X-Y\|^2, \forall X, Y \in \mathcal{C},
	\end{aligned}
	\end{equation}
	where $ \nabla \bar{F}(X) \coloneqq [\nabla f_1\T (x_1), \ldots, \nabla f_n\T (x_n)]\T $.
\end{assumption}
Different from Assumption~\ref{assup_convex}, assumption~\ref{assump_Lipschitz_differentiable} requires $ f_i(\cdot) $ to be differentiable.
Assumption~\ref{assump_strongly_convex} does not require each $ f_i(\cdot) $ to be strongly convex while the work of~\cite{makhdoumi2017convergence} does.  

\begin{theorem}
	Let $ \{X^k, z^k, \lambda^k\} $ be the iterates from Algorithms~\ref{algorithm:1} and \ref{algorithm:2} for problem~\eqref{objective_function}, where $ X^k = [{x_1^{k}}\T,{x_2^{k}}\T, \ldots, {x_n^{k}}\T]\T $ and $ \lambda^k = [{\lambda_1^{k}}\T,{\lambda_2^{k}}\T, \ldots, {\lambda_n^{k}}\T]\T $. 
	Considering a strongly connected communication graph, 
	under Assumptions \ref{assup_graph}-\ref{assump_strongly_convex}, $ X^k $
	 converges approximately R-linearly.
\end{theorem}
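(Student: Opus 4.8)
The plan is to upgrade the telescoping argument behind Theorem~\ref{theorem_convergence} into a strict per-iteration contraction of a Lyapunov function and then convert Q-linear decay of that function into R-linear decay of $X^{k}$. First I would rewrite the optimality conditions in differentiable form. Since Assumption~\ref{assump_Lipschitz_differentiable} makes each $f_i$ continuously differentiable, the subgradient $\bar h$ appearing in \eqref{optimal_condition_X} becomes $\nabla \bar{F}$, so the $x$-update together with \eqref{dadmm_lamda} yields the exact identity $\nabla \bar{F}(X^{k+1}) + \lambda^{k+1} + \rho(z^{k+1}-z^{k}) = 0$, while at the saddle point $\nabla \bar{F}(X^{*}) + \lambda^{*} = 0$ and $X^{*}=z^{*}$. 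I would then introduce the combined error
\[
V^{k} \coloneqq \tfrac{1}{\rho}\|\lambda^{k}-\lambda^{*}\|^{2} + \rho\,\|z^{k}-z^{*}\|^{2},
\]
which is precisely the quantity whose difference $V^{k}-V^{k+1}$ bounds the right-hand side of \eqref{proof4}; note that the constraint matrices here are $A=I$, $B=-I$, so the full-rank conditions usually required for linear rates are met trivially.

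Second, I would sharpen \eqref{proof4} by retaining the negative term $-\tfrac{\rho}{2}\|X^{k+1}-z^{k}\|^{2}$ that was discarded in the $O(1/k)$ analysis and by invoking strong convexity. Because $L(\cdot,\cdot,\lambda^{*})$ is strongly convex in $X$ via Assumption~\ref{assump_strongly_convex} and $(X^{*},z^{*})$ minimizes it, the saddle-point gap on the left of \eqref{proof4}, namely $L(X^{k+1},z^{k+1},\lambda^{*})-L(X^{*},z^{*},\lambda^{*})$, is bounded below by $\tfrac{\mu}{2}\|X^{k+1}-X^{*}\|^{2}$. Combining this lower bound with the retained term produces the descent inequality
\[
V^{k+1} \le V^{k} - \rho\,\|X^{k+1}-z^{k}\|^{2} - \mu\,\|X^{k+1}-X^{*}\|^{2}.
\]

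Third, and this is the crux, I would show that the two nonnegative terms subtracted on the right dominate a fixed fraction of $V^{k+1}$, i.e. that there is a $\delta>0$ with $\rho\|X^{k+1}-z^{k}\|^{2} + \mu\|X^{k+1}-X^{*}\|^{2} \ge \delta\,V^{k+1}$, which immediately gives the Q-linear bound $V^{k+1}\le(1+\delta)^{-1}V^{k}$. To establish it I would bound the two pieces of $V^{k+1}$ separately: the dual error is controlled via $\lambda^{k+1}-\lambda^{*} = -(\nabla \bar{F}(X^{k+1})-\nabla \bar{F}(X^{*})) - \rho(z^{k+1}-z^{k})$ from the optimality identity, and the copy error $z^{k+1}-z^{*}$ is routed through $X^{k+1}$ using $X^{k+1}-z^{k+1}=(\lambda^{k+1}-\lambda^{k})/\rho$ and the triangle inequality. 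Finally, since $\tfrac{\mu}{2}\|X^{k+1}-X^{*}\|^{2}\le\tfrac{1}{2}(V^{k}-V^{k+1})\le\tfrac{1}{2}V^{k}$, the choice $\varepsilon_{k+1}\coloneqq\sqrt{V^{k}/\mu}$ gives $\|X^{k+1}-X^{*}\|\le\varepsilon_{k+1}$ with $\{\varepsilon_{k}\}$ converging Q-linearly to zero (as $\sqrt{\cdot}$ preserves the Q-linear factor); by the definition of R-linear convergence recalled in Section~\ref{sec:preliminaries}, $X^{k}$ then converges R-linearly.

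The main obstacle is exactly this third step. Bounding $\|\lambda^{k+1}-\lambda^{*}\|$ requires controlling $\|\nabla \bar{F}(X^{k+1})-\nabla \bar{F}(X^{*})\|$ by a multiple of $\|X^{k+1}-X^{*}\|$, which calls for a Lipschitz gradient, whereas Assumption~\ref{assump_Lipschitz_differentiable} supplies only continuity rather than a uniform modulus; moreover, since $g$ is merely the indicator of $\mathcal{C}$ and hence not strongly convex, $\|z^{k+1}-z^{*}\|$ cannot be bounded directly and must be passed through the primal residual. It is precisely this gap, a contraction factor certifiable only up to a locally valid smoothness estimate rather than a global constant, that forces the conclusion to be stated as an \emph{approximate} R-linear rate.
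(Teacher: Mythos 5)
Your route is genuinely different from the paper's, and it is worth being precise about where the two diverge. After the shared first step (rewriting the optimality conditions with gradients, as in \eqref{optimal_gradient} and \eqref{optimal_k2}), the paper never attempts a Lyapunov contraction: it combines strong convexity \eqref{important_inequality0}, the variational inequality \eqref{optimal_condition}, the dual update \eqref{dadmm_lamda} and the identity \eqref{equality_law} to derive the \emph{direct} bound \eqref{Xk_final}, $\|X^{k+1}-X^{*}\| \le c\,\|\tfrac{1}{\rho}(\lambda^{k}-\lambda^{*})-(z^{k}-z^{*})\|$, and then simply \emph{cites} Section 3.4 of \cite{deng2016global} for the Q-linear convergence of $(z^{k},\lambda^{k})$; the ``approximate'' R-linear rate of $X^{k}$ follows from the definition recalled in Section~\ref{sec:preliminaries}. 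You instead set out to prove that Q-linear convergence yourself, via a per-iteration contraction of the Deng--Yin function $V^{k}=\tfrac{1}{\rho}\|\lambda^{k}-\lambda^{*}\|^{2}+\rho\|z^{k}-z^{*}\|^{2}$. The parts of your argument that you actually carry out are sound: the descent inequality $V^{k+1}\le V^{k}-\rho\|X^{k+1}-z^{k}\|^{2}-\mu\|X^{k+1}-X^{*}\|^{2}$ does follow from \eqref{proof4} (keeping the discarded negative term and cancelling $\tfrac{\rho}{2}\|X^{k+1}-z^{k+1}\|^{2}$ against $\tfrac{1}{2\rho}\|\lambda^{k+1}-\lambda^{k}\|^{2}$) together with the strong-convexity lower bound $\tfrac{\mu}{2}\|X^{k+1}-X^{*}\|^{2}$ on the saddle-point gap; and the final conversion $\varepsilon_{k+1}\coloneqq\sqrt{V^{k}/\mu}$ is valid, since taking square roots preserves Q-linear decay.

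The genuine gap is the one you name yourself: the crux step, existence of $\delta>0$ with $\rho\|X^{k+1}-z^{k}\|^{2}+\mu\|X^{k+1}-X^{*}\|^{2}\ge\delta V^{k+1}$, is never established, and under Assumptions~\ref{assump_Lipschitz_differentiable}--\ref{assump_strongly_convex} it cannot be: controlling $\|\lambda^{k+1}-\lambda^{*}\|$ through $\lambda^{k+1}-\lambda^{*}=-(\nabla\bar{F}(X^{k+1})-\nabla\bar{F}(X^{*}))-\rho(z^{k+1}-z^{k})$ needs a Lipschitz constant for $\nabla\bar{F}$, and continuous differentiability supplies none, while $g$, being an indicator, contributes no strong convexity in $z$. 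Without that step your argument proves only monotone decrease of $V^{k}$ (hence convergence of $X^{k}$), not a linear rate, so as written it does not prove the theorem. Two further remarks. First, declaring that this unproved step is ``what forces the conclusion to be approximate'' is not a substitute for a proof; ``approximate R-linear'' is not a notion your argument defines or certifies. Second, you should know that the paper's own proof carries exactly the same assumption deficit, only hidden: the linear-rate scenarios of \cite{deng2016global} invoked after \eqref{Xk_final} themselves require a Lipschitz-continuous gradient (and both proofs also apply \eqref{important_inequality0} and \eqref{optimal_condition} at $X^{k+1}$, which need not lie in $\mathcal{C}$). If you add a Lipschitz-gradient assumption $\|\nabla\bar{F}(X)-\nabla\bar{F}(Y)\|\le L\|X-Y\|$, your contraction closes and yields a self-contained, genuinely R-linear rate --- arguably a cleaner result than the citation-dependent statement in the paper.
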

\begin{proof}
	From Assumption
	 \ref{assump_Lipschitz_differentiable}, $ f_i(x_i) $ is continuously differentiable for all $ x_i \in \text{dom} f_i $. By the first order optimality condition, for the optimal solution $ (X^{*},z^{*},\lambda^{*}) $, i.e., $  x_i^{*}-z_i^{*}=0 $, from Eqs.~\eqref{dadmm_x}, \eqref{dadmm_z} and \eqref{dadmm_lamda}, we have
	\begin{align}
	0 = \nabla f_i(x_i^{*}) + \lambda_i^{*}. \label{optimal_gradient}
	\end{align}
	Similarly, for solution $ (X^{k+1},z^{k+1},\lambda^{k+1}) $, we get
	\begin{align}
	0 =& \nabla f_i(x_i^{k+1}) + \lambda_i^{k} + \rho (x_i^{k+1} - z_i^{k}) \nonumber\\
	=& \nabla f_i(x_i^{k+1}) + \lambda_i^{k+1} + \rho (z_i^{k+1} - z_i^{k}),\label{optimal_k2}
	\end{align}

	From \eqref{important_inequality0}, by assigning $ X \coloneqq X^{k+1}, Y \coloneqq X^{*} $, we get
	\begin{equation}\label{important_inequality}
	\begin{aligned}
	\langle \nabla \bar{F}(X^{k+1})-\nabla \bar{F}(X^{*}), X^{k+1}-X^{*} \rangle \ge \mu\| X^{k+1}-X^{*}\|^2.
	\end{aligned}
	\end{equation}
	Based on the optimal condition theory, we have $  \nabla \bar{F}(X^{*})\T (X-X^{*}) \ge 0, \forall X \in \mathcal{C}$, i.e., 
	\begin{equation}\label{optimal_condition}
	\nabla \bar{F}(X^{*})\T (X^{k+1}-X^{*}) \ge 0.
	\end{equation}
	Then, from~\eqref{dadmm_lamda}, \eqref{optimal_k2} \eqref{important_inequality}  and \eqref{optimal_condition}, we obtain
	\begin{align}
	\mu&\| X^{k+1}-X^{*}\|^2 \le  \nabla \bar{F}(X^{k+1})\T (X^{k+1}-X^{*})\nonumber\\& - \nabla \bar{F}(X^{*})\T (X^{k+1}-X^{*}) \nonumber\\
	=& (X^{k+1}-X^{*})\T [\lambda^{*}-\lambda^{k} -\rho (  X^{k+1} -  z^{k}) ] \nonumber\\
	=&  \rho(X^{*} - X^{k+1})\T(X^{k+1}-X^{k}) + (X^{k+1}-X^{*})\T\nonumber\\& \times[\rho(X^{k+1}-X^{k})+ \lambda^{*}-\lambda^{k} -\rho (  X^{k+1} -  z^{k}) ]\nonumber\\
	=& \rho(X^{*} - X^{k+1})\T(X^{k+1}-X^{k}) \nonumber\\&+ (X^{k+1}-X^{*})\T[\lambda^{*}-\lambda^{k} -\rho (  X^{k} -  z^{k}) ].\label{Pi}
	\end{align}
	From the equality law~\eqref{equality_law}, \eqref{Pi} changes to
	\begin{align}
	\mu&\| X^{k+1}-X^{*}\|^2 \le  \rho(X^{*} - X^{k+1})\T(X^{k+1}-X^{k}) \nonumber\\&- (X^{k+1}-X^{*})\T[\lambda^{k}-\lambda^{*} +\rho (  X^{k} -  z^{k}) ]\nonumber \\
	\le & \frac{\rho}{2}  ( \| X^{k} -X^{*} \|^2  - \| X^{k+1} -X^{*} \|^2) -\frac{\rho}{2}\| X^{k+1} -X^{k} \|^2 \nonumber\\& - (X^{k+1}-X^{*})\T[\lambda^{k}-\lambda^{*} +\rho (  X^{k} -  z^{k}) ].\label{Pi_1}
	\end{align}
	Denote 
\begin{equation}\label{defi_tao_k}
\tau_k\coloneqq  \frac{\rho}{2}\| X^{k} -X^{*} \|^2.
\end{equation}
	One can see that if $ \tau_k \rightarrow 0 $, then $ X^{k} \rightarrow X^{*} $, i.e., the optimal problem is solved.
	 Based on  \eqref{Pi_1}, we get
	\begin{align}
	(1+ \frac{2\mu}{\rho})\tau_{k+1}\le \tau_k -\frac{\rho}{2} \Omega_k,
	 \label{eq_linearConvergence_condition}
	\end{align}
	where $ \Omega_k \eqqcolon \| X^{k+1} -X^{k} \|^2 +  (X^{k+1}-X^{*})\T[\frac{2}{\rho}(\lambda^{k}-\lambda^{*}) +2 (  X^{k} -  z^{k}) ] $.
	In order to prove the linear convergence of our algorithm, we need to prove $ \Omega_k \ge 0 $. From~\eqref{defi_tao_k},
	we have 
%
	{\small
		\begin{align}
	\Omega_k 
	=& \| (X^{k+1}-X^{*}) -(X^{k}-X^{*}) \|^2 + [(X^{k+1}-X^{*})-(X^{k}\nonumber \\ 
	&-X^{*})]\T[\frac{2}{\rho}(\lambda^{k}-\lambda^{*}) +2 ( (X^{k}-X^{*}) -  (z^{k}-X^{*})) ]\nonumber \\ 
	& + {(X^{k}-X^{*})}\T[\frac{2}{\rho}(\lambda^{k}-\lambda^{*}) +2 (  (X^{k}-X^{*}) -  (z^{k}-X^{*})) ] \nonumber \\ 
	=& \|(X^{k+1}-X^{*}) -(X^{k}-X^{*}) + \frac{1}{\rho}(\lambda^{k}-\lambda^{*}) + ( (X^{k}-X^{*})\nonumber \\ 
	& -  (z^{k}-X^{*})) \|^2 + 2{(X^{k}-X^{*})}\T[\frac{1}{\rho}(\lambda^{k}-\lambda^{*})\nonumber \\ 
	& + (  (X^{k}-X^{*}) -  (z^{k}-X^{*})) ]  - \|  \frac{1}{\rho}(\lambda^{k}-\lambda^{*}) \nonumber \\ 
	&+ ( (X^{k}-X^{*}) -  (z^{k}-X^{*})) \|^2\nonumber \\ 
	=& \| (X^{k+1}-X^{*})  + \frac{1}{\rho}(\lambda^{k}-\lambda^{*}) -  (z^{k}-X^{*}) \|^2 \nonumber \\ 
	&+ \|X^{k}-X^{*}\|^2 - \|\frac{1}{\rho}(\lambda^{k}-\lambda^{*}) -  (z^{k}-X^{*})\|^2\label{Omega_k2} 
	\end{align}}
	By applying the following basic inequality~\cite{deng2016global}:
	\begin{equation}
	\| u+v  \|^2 \ge (1-\frac{1}{\mu_1}) \| u \|^2 + ( 1 - \mu_1 ) \| v  \|^2, \forall \mu_1 >0,
	\end{equation}
	\eqref{Omega_k2} changes to
		\begin{align}
	&\Omega_k 
	\ge (1-\frac{1}{\mu_1}) \|\frac{1}{\rho}(\lambda^{k}-\lambda^{*}) - (z^{k}-X^{*}) \|^2 + ( 1 - \mu_1 )\nonumber \\ 
	&\times\| X^{k+1} -X^{*}  \|^2 + \|X^{k}-X^{*}\|^2 \nonumber \\ 
	&- \|\frac{1}{\rho}(\lambda^{k}-\lambda^{*}) - (z^{k}-X^{*})\|^2\nonumber \\ =&\|X^{k}-X^{*}\|^2 + ( 1 - \mu_1 )\| X^{k+1} -X^{*}  \|^2\nonumber \\ 
	&-\frac{1}{\mu_1} \|\frac{1}{\rho}(\lambda^{k}-\lambda^{*}) - (z^{k}-X^{*}) \|^2 \\ =&\frac{2}{\rho}\tau_{k}+ \frac{2( 1 - \mu_1 )}{\rho} \tau_{k+1}-\frac{1}{\mu_1} \|\frac{1}{\rho}(\lambda^{k}-\lambda^{*}) - (z^{k}-X^{*}) \|^2. \nonumber
	\end{align}
Consequently, \eqref{eq_linearConvergence_condition}  changes to
{\small
\begin{align}
\| X^{k+1} &-X^{*} \| \le \sqrt{\frac{\rho}{\mu_1[(2-\mu)\rho+ 2\mu]}}\|\frac{1}{\rho}(\lambda^{k}-\lambda^{*}) \nonumber \\ &- (z^{k}-z^{*}) \| \label{Xk_final}\\ 
\le& \sqrt{\frac{\rho}{\mu_1[(2-\mu)\rho+ 2\mu]}} (\|\frac{1}{\rho}(\lambda^{k}-\lambda^{*}) \|+\| (z^{k}-z^{*}) \|).\nonumber
\end{align}}
Based on Section 3.4 in~\cite{deng2016global}, $ (z^{k}, \lambda^{k}) $ converges Q-linearly. Then, based on the definition of R-linear convergence, from \eqref{Xk_final}, $ x^{k} $ converges approximately R-linearly.
\end{proof}
\begin{remark}
	Only the Q-linear convergence of $ (z^{k}, \lambda^{k}) $ is proved in~\cite{deng2016global}. In this paper, by \eqref{Xk_final}, though $ x^{k} $ converges not exactly R-linearly, we prove it converges  approximately R-linearly.
\end{remark}
%
%
%
%
\section{Examples}\label{sec:examples}

In this section, two examples are presented to demonstrate the effectiveness and performance of Algorithms~\ref{algorithm:1} (in Sec.~\ref{sec_alg1}) and~\ref{algorithm:2} (in Sec.~\ref{sec_alg2}).

\subsection{Distributed least square problem}\label{sec_alg1}
The distributed least square problem is considered as
\vspace{-0.2cm}
\begin{equation}\label{problem_lesatSquare}
\operatorname*{argmin}_{x\in \mathbb{R}^{p}}  f(x) = \frac{1}{2}\sum_{i=1}^{n} \|A_ix-b_i\|^2,
\end{equation}
where $ A_i  \in \mathbb{R}^{q\times p} $ is only known to node $ v_i $, $ b_i \in \mathbb{R}^{q} $ is the measured data and $ x \in \mathbb{R}^{ p} $ is the common decision variable that needs to be optimized. For the automatic generation of large number of different matrices $ A_i $, we choose $ q = p $ to have the square $ A_i $. All elements of $ A_i $ and $ b_i $ are set from independent and identically distributed (i.i.d.) samples of standard normal distribution $ \mathcal{N}(0,1) $.
To better demonstrate the difference between our proposed Algorithm~\ref{algorithm:1} and the algorithm in \cite{khatana2020d}, we set $ p=3 $.

\begin{figure}[t]
	\includegraphics[width=\columnwidth]{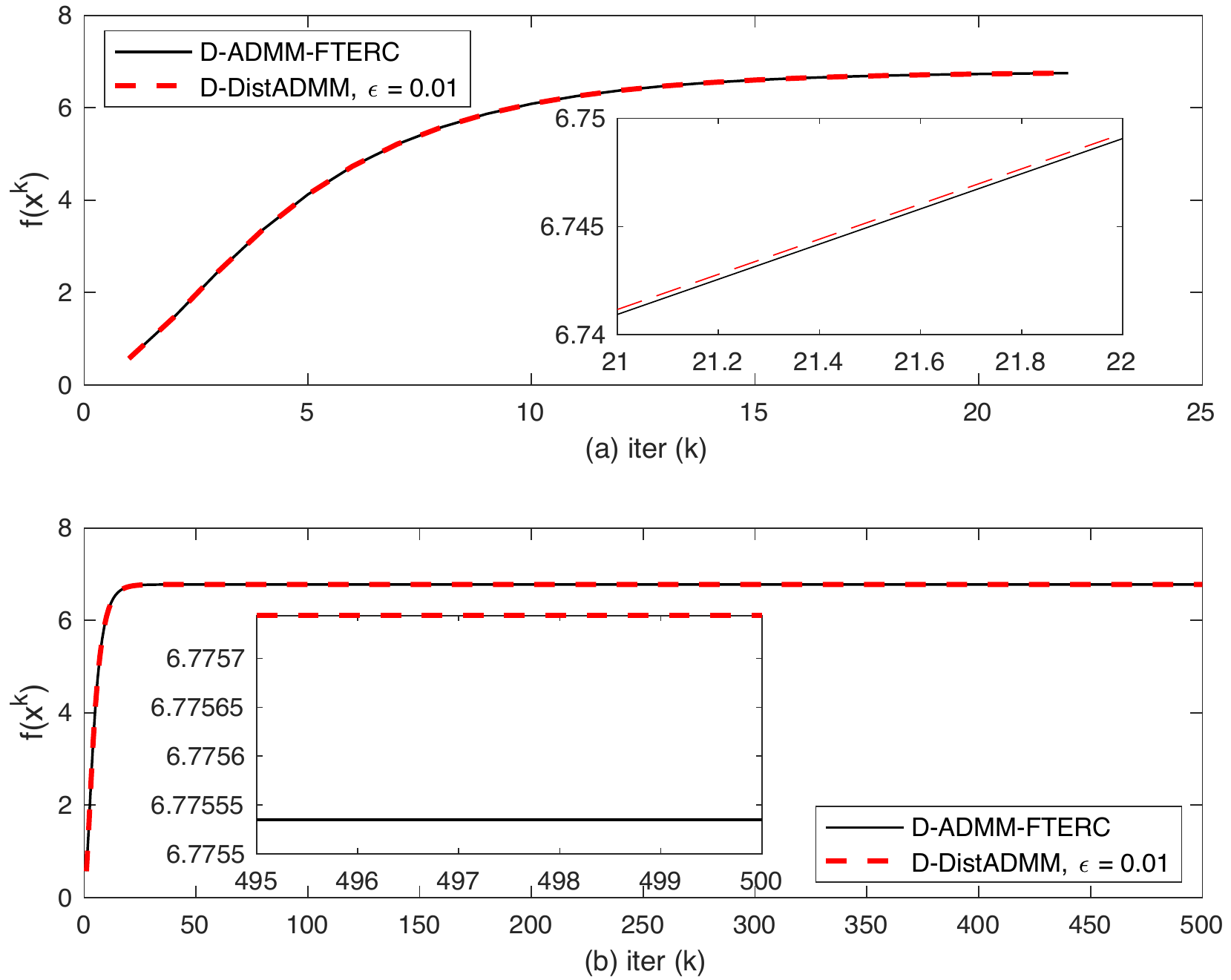}
	\vspace{-0.7cm}
	\caption{Comparison of solutions to problem~\eqref{problem_lesatSquare} between D-ADMM-FTERC in this paper and D-DistADMM in~\cite{khatana2020d}: (a) with stopping condition (steps 14-16 in Algorithm~\ref{algorithm:1} with  absolute tolerance  $ = 1\exp{-4}$ and
		relative tolerance   $= 1\exp{-2} $); (b) without stopping condition (herein $ k_{\max} = 500 $).}
	\label{fig_obj_comparison}
\end{figure}
\begin{figure}[t]
	\includegraphics[width=\columnwidth]{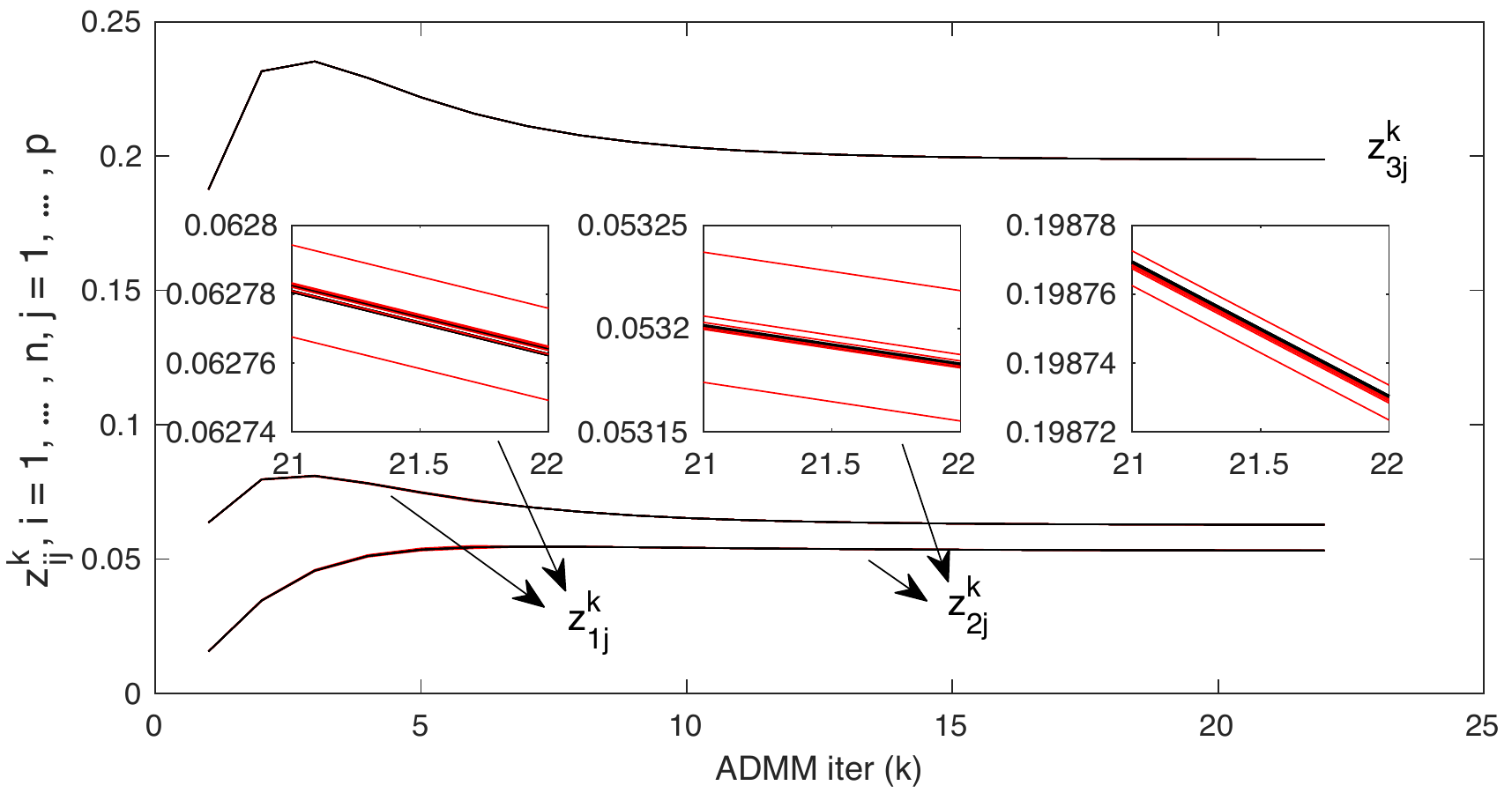}
	\vspace{-0.7cm}
	\caption{Technique comparison of FTERC and finite-time $ \epsilon $ consensus in~\cite{khatana2020d}.  $ z^{k+1} $ update in~\eqref{dadmm_z}: one can see clearly that $ z_{ij}, i = 1,\ldots,n $ in FTERC (black color) converge to the same value during each ADMM iteration while stay inside a bound ($ \|z_i-z_j\|\le \epsilon $) in finite-time $ \epsilon $ consensus (red color). 
	}
	\label{fig_z_rc_step}
\end{figure}
First, we choose $ n=6 $ to have 6 nodes having a strongly connected digraph. Fig.~\ref{fig_obj_comparison} shows that the solution of D-ADMM-FTERC is always smaller than the one of D-DistADMM in~\cite{khatana2020d} no matter whether we have ADMM stopping condition or not, which verifies Remark~\ref{remark_comparison}. Note the value of $ \epsilon = 0.01 $ is already very small and $ \epsilon $ cannot be zero. If it is the case, the finite-time $ \epsilon $ consensus technique in D-DistADMM will become normal ratio consensus technique in Section~\ref{preliminary_rc}, making the $ z^{k+1} $ update in~\eqref{dadmm_z} converges in infinite time as it asks $ z_i = z_j $ (from $ \|z_i-z_j\|\le \epsilon $). 
Fig.~\ref{fig_z_rc_step} gives more details about the comparisons. 


Now, we choose $ n=70, 100 $ and $ 700 $, respectively, with a random strongly connected digraph. Here, we choose $ n' = n+1 $ from  the FERTC structure in Fig.~\ref{FTERC_structure} which is verified by Fig~\ref{fig_z_rc_stepN100}.
In addition,
Fig~\ref{fig_z_rc_stepN100} demonstrates that after the first 2 ADMM iterations,  FTERC iteration number in each ADMM iteration is much smaller than finite-time $ \epsilon $ consensus in~\cite{khatana2020d}.
Note that during the first two ADMM optimization steps, finite-time $ \epsilon $ consensus has less iteration numbers. This is related to the node number.
Furthermore, Table~\ref{tab_timeComparison} describes the running time comparison on an Intel Core i5 processor at 2.6 GHz with Matlab R2020b for $ \epsilon = 0.01, k_{\max}=200 $ for this example. One can see D-ADMM-FTERC is time-efficient than D-DistADMM, and is much more time-efficient for large scale systems.

\begin{figure}[h]
	\includegraphics[width=0.95\columnwidth]{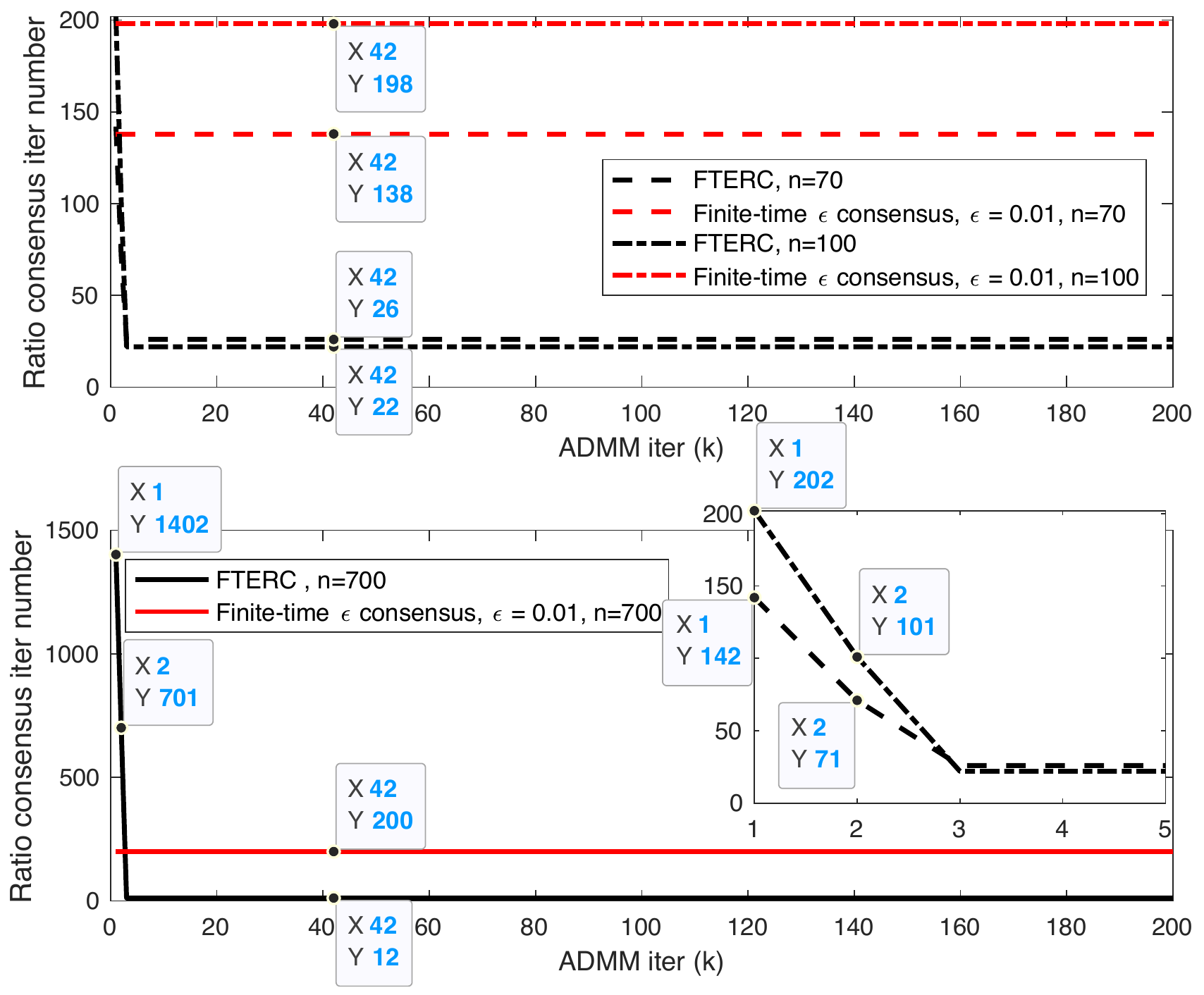}
	\caption{ Ratio consensus iteration number comparison: $ n=70, 100, 700 $.
	}
	\label{fig_z_rc_stepN100}
\end{figure}

\begin{table}[h]
	\centering
	\caption{Running time comparison.}
	\label{tab_timeComparison}
	{\small
		\begin{tabular}{clll}
			\hline
			\multicolumn{1}{|c|}{}             & \multicolumn{1}{c|}{n=70}       & \multicolumn{1}{c|}{n=100}      & \multicolumn{1}{l|}{n=700}        \\ \hline
			\multicolumn{1}{|c|}{D-ADMM-FTERC} & \multicolumn{1}{l|}{1.4122s}  & \multicolumn{1}{l|}{2.1967s}  & \multicolumn{1}{l|}{52.0160s}    \\ \hline
			\multicolumn{1}{|c|}{D-DistADMM}   & \multicolumn{1}{l|}{22.5361s} & \multicolumn{1}{l|}{78.7964s} & \multicolumn{1}{l|}{30867.5860s} \\ \hline
			&                                 &                                 &                                 
	\end{tabular}}
\end{table}

\subsection{Distributed $ l_1 $ regularized logistic regression problem}\label{sec_alg2}
The background of {distributed $ l_1 $ regularized logistic regression} is elaborated in Sec. 11.2 of \cite{boyd2011distributed}. The problem is
\begin{equation}\label{problem_l1}
\operatorname*{argmin}_{x\in \mathbb{R}^{p}, v\in \mathbb{R}}  \sum_{i=1}^{m} \log(1+ \exp (-b_i (a_i^Tx + v) )) + \mu \|x\|_1,
\end{equation}
where the training set consists of $ m $ pairs $ (a_i, b_i) $ with $ a_i\in \mathbb{R}^{p} $ being a feature vector and $ b_i\in \{-1,1\}$ being the corresponding label. For problem~\eqref{problem_l1}, we generate $ m = 2\times 10^4 $ training examples and $ p=200 $ features. The $ m $ examples are distributed among $ n=100 $ subsystems (processors or nodes) among which a strongly connected communication graph (topology) can be built.

The `true' vector $ x^{true} \in \mathbb{R}^{p} $ has $ 100 $ normally distributed nonzero entries. The true intercept $ v^{true} \in \mathbb{R} $ is also sampled independently from a standard normal distribution. Labels $ b_i $ are generated by $ b_i = \textbf{sign}(a_i^T x^{true}+ v^{true} +v_i), v_i \in \mathcal{N}(0,0.1) $.
The regularization parameters $ \mu $ is set as $ \mu = 0.1 \mu_{\max}  $, where $\mu_{\max}  $ is the critical value above which the solution of the problem is $ x^{*} = 0 $. The Sec. 11.2 of \cite{boyd2011distributed} presents details how to calculate $ \mu_{\max} $.

Denote $ f_i(x_i) \coloneqq \log(1+ \exp (-b_i (a_i^Tx_i + v) )), r(x) \coloneqq  \mu \|x\|_1 $. To solve problem~\eqref{problem_l1}, in Sec. 8.2 of \cite{boyd2011distributed}, fitting the model involves solving the global consensus problem in the following with local variables $ x_i = (v_i, w_i) $ and global consensus variable $ z = (v,w) $:
\begin{equation}\label{regulation_admm_problem}
\begin{aligned}
\min \, &\sum_{i=1}^{n} f_i(x_i)+r(z), \\
\text{s.t.}  \, &x_i = z, i = 1, \ldots, n.
\end{aligned}
\end{equation}
If $ r $ is fully separable,  the generic global variable consensus ADMM algorithm described in Sec. 7.1 of \cite{boyd2011distributed} can be applied with scaled dual variable as 
\begin{align}
x_i^{k+1} =&  \operatorname*{argmin}_{x_i} \left(  f_i(x_i) +  (\rho/2)\|x_i - z^{k} + u_i^k \|_2^{2} \right), \label{dadmm_x1}\\
z^{k+1} =&  \operatorname*{argmin}_z \left(  r(z) +(n\rho/2)\|z- \bar x^{k+1} - \bar \lambda^{k} \|_2^{2}\right), \label{dadmm_z1}\\
\lambda_i^{k+1} =& \lambda_i^{k} +  x_i^{k+1} - z^{k+1} \label{dadmm_lamda1},
\end{align}
where $ \bar x $ is the average of $ x_1, \ldots, x_n $ and $ \bar \lambda $ is the average of $ \lambda_1, \ldots, \lambda_n $. 

The $ z $-update includes an averaging step (i.e., $ \bar x, \bar \lambda $ ) and a followed proximal step involving $ r $ which is a threshold operation:
\begin{equation}
z^{k+1} \coloneqq S_{\mu/(n\rho)} (\bar x^{k+1} + \bar \lambda^{k}).
\end{equation}
It is worth noting that the $ z $-update needs a central collector to calculate $ \bar x^{k+1} $ and  $ \bar \lambda^{k} $ in each ADMM step $ k $ and it assumes each subsystem has the same global variable $ z $. A central collector means an extra subsystem (node) is needed to collect data from all other subsystems, do the average (or even the whole $ z $-update because $ z $ is global) calculation and broadcast the calculated results to those subsystems, which is one centralized step, not distributed.


To make the $ z $-update distributed, our proposed FTERC or {FTDT} can be applied here. Unlike \eqref{regulation_admm_problem}, we formulate the local consensus problem as follows:
\begin{equation}\label{regulation_admm_problem2}
\begin{aligned}
\min \, &\sum_{i=1}^{n} f_i(x_i)+r(z_i), \\
\text{s.t.}  \, &x_i = z_i, i = 1, \ldots, n,
\end{aligned}
\end{equation}
which is similar as \eqref{problem_reformulated2}. Denote the vector $ Z \coloneqq [z_1^T, z_2^T, \ldots, z_n^T]^T, Z\in \mathbb{R}^{np}  $.  The $ z $-update changes to
\begin{equation}\label{zUpdate_fterc}
Z^{k+1} =  \operatorname*{argmin}_Z \left(  r(Z) + g(Z) +(n\rho/2)\|Z- \bar x^{k+1} - \bar \lambda^{k} \|_2^{2}\right), \\
\end{equation}
where $ g(Z) $ is defined in~\eqref{indicator_function}.
The solution of \eqref{zUpdate_fterc} also takes two steps.
First, by forcing $ Z $ to go into set $ \mathcal{C} $ defined in~\eqref{setC}, we use FTERC or {FTDT} to distributively get the average $ \bar z_i^k \coloneqq \bar x^{k+1} + \bar \lambda^{k}$ for each subsystem $ i $. 
Then, \eqref{zUpdate_fterc} changes to $ z_i^{k+1} =  \operatorname*{argmin}_{z_i} \left(  r(z_i) +(n\rho/2)\|z_i- \bar z_i^k \|_2^{2}\right), $ and the threshold operator is used to get the solution $ z_i^{k+1} \coloneqq S_{\mu/(n\rho)} (\bar z_i^k) $.

Same as in \cite{boyd2011distributed}, we use L-BFGS-B to carry out $ x $-updates~\eqref{dadmm_x1} distributively (in parallel). The $ \lambda $-updates are also the same as \eqref{dadmm_lamda1}. The Matlab codes for the algorithm \eqref{dadmm_x1},
\eqref{dadmm_z1} and \eqref{dadmm_lamda1} in~\cite{boyd2011distributed} are the example 10 in \url{https://web.stanford.edu/~boyd/papers/admm/}.

Based on our proposed FTERC or {FTDT}, both $ x $-update and $ z $-update are distributed, while in the previous codes only $ x $-update is distributed.
This is useful either when there are so many training examples that it is inconvenient or impossible to process them on a single machine or when the data is naturally collected or stored in a distributed fashion. This includes, for example, on-line social network data, web server access logs, wireless sensor networks, and many cloud computing applications more generally.

Fig.~\ref{fig_l1_regulation} (a) shows that the above two algorithms and D-DistADMM in~\cite{khatana2020d} can reach the minimization objective as we can see there is nearly no obvious difference in the optimal value. 
However, it is worth noting that the above fully distributed advantage from {F}D-ADMM-FTDT and  D-DistADMM also comes with one cost: running time. By running this example on an Intel Core i5 processor at 2.6 GHz with Matlab R2020b for $ \epsilon = 0.05 $, the running time for above three algorithms are respectively 3.691185s, 238.563915s, 1173.344407s. One can see even though FD-ADMM-FTDT running time is much larger than the algorithm in \cite{boyd2011distributed} in which the average calculation is done with a simple \textit{mean} function in Matlab by the central collector, it is nearly 5 times less then D-DistADMM which is because the running ratio consensus steps is much less shown in Fig.~\ref{fig_l1_regulation} (b). 

\begin{figure}[h]
	\centering
	\includegraphics[width=\columnwidth]{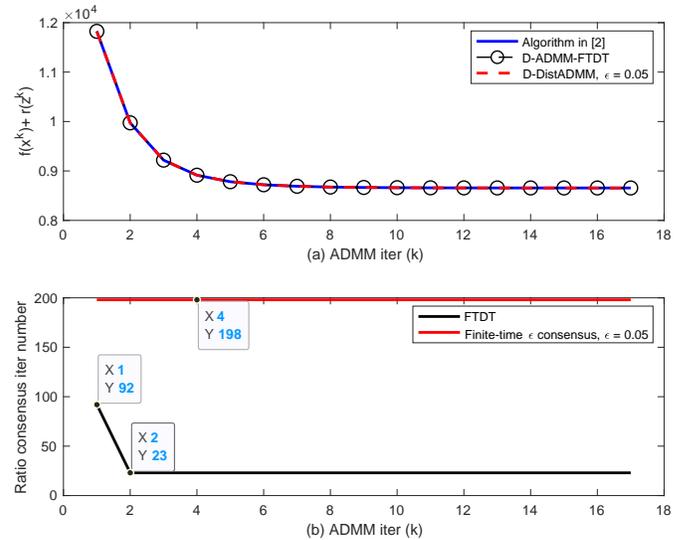}
	\caption{Comparison of solutions to problem~\eqref{problem_l1} among the algorithm in \cite{boyd2011distributed}, FD-ADMM-FTDT in this paper and D-DistADMM in~\cite{khatana2020d} with absolute tolerance  $  1\exp^{-4}$ and
		relative tolerance   $ 1\exp^{-2} $: (a) minimized objective value; (b) ratio consensus running number.}
	\label{fig_l1_regulation}
\end{figure}


%
%
%
%
\section{Conclusions and Future Directions}\label{sec:conclusions}

\subsection{Conclusions}
A distributed alternating direction method of multipliers using finite-time exact ratio consensus (D-ADMM-FTERC) algorithm is proposed  to solve the multi-node convex optimization problem under digraphs.
Compared to other state of art distributed ADMM algorithms, D-ADMM-FTERC can not only apply to digraphs and reach the exact optimal solution, but also is very time-efficient, especially for large-scale systems.
\subsection{Future Directions}
This work assumes that the nodes are aware of an upper bound of the size of the network, which in most applications consisting of static networks (e.g., in data centers) this is readily available. However, in more dynamical networks (e.g., sensor networks) this can become a limitation. We plan to find ways to terminate the consensus process in a distributed fashion without any knowledge of the size or the structure of the network.

Furthermore, it would be interesting to study the asynchronous implementation of our proposed method.

%
%
%
%

\bibliographystyle{IEEEtran}
\bibliography{bib_finite}

\begin{thebibliography}{10}
\providecommand{\url}[1]{#1}
\csname url@rmstyle\endcsname
\providecommand{\newblock}{\relax}
\providecommand{\bibinfo}[2]{#2}
\providecommand\BIBentrySTDinterwordspacing{\spaceskip=0pt\relax}
\providecommand\BIBentryALTinterwordstretchfactor{4}
\providecommand\BIBentryALTinterwordspacing{\spaceskip=\fontdimen2\font plus
\BIBentryALTinterwordstretchfactor\fontdimen3\font minus
  \fontdimen4\font\relax}
\providecommand\BIBforeignlanguage[2]{{%
\expandafter\ifx\csname l@#1\endcsname\relax
\typeout{** WARNING: IEEEtran.bst: No hyphenation pattern has been}%
\typeout{** loaded for the language `#1'. Using the pattern for}%
\typeout{** the default language instead.}%
\else
\language=\csname l@#1\endcsname
\fi
#2}}

\bibitem{ECC2021:Wei-Themis}
W.~Jiang and T.~Charalambous, ``Distributed alternating direction method of
  multipliers using finite-time exact ratio consensus in digraphs,'' in
  \emph{Proceedings of the European Control Conference (ECC)}, July 2021,
  accepted.

\bibitem{2004:Sensors}
M.~Rabbat and R.~Nowak, ``Distributed optimization in sensor networks,'' in
  \emph{3rd Int. Symp. Information Proc. Sensor Netw.}, April 2004, pp. 20--27.

\bibitem{boyd2011distributed}
S.~Boyd, N.~Parikh, and E.~Chu, \emph{Distributed optimization and statistical
  learning via the alternating direction method of multipliers}.\hskip 1em plus
  0.5em minus 0.4em\relax Now Publishers Inc, 2011.

\bibitem{Nedic2020Magazine}
A.~{Nedic}, ``Distributed gradient methods for convex machine learning problems
  in networks: Distributed optimization,'' \emph{IEEE Signal Proc. Mag.},
  vol.~37, no.~3, pp. 92--101, 2020.

\bibitem{1986:tsitsiklis}
J.~Tsitsiklis, D.~Bertsekas, and M.~Athans, ``Distributed asynchronous
  deterministic and stochastic gradient optimization algorithms,'' \emph{{IEEE}
  Trans. Autom. Control}, vol.~31, no.~9, pp. 803--812, Sep. 1986.

\bibitem{1989:BnT}
D.~Bertsekas and J.~Tsitsiklis, \emph{Parallel and Distributed Computation:
  Numerical Methods}.\hskip 1em plus 0.5em minus 0.4em\relax Prentice-Hall,
  1989.

\bibitem{phdthesis:Johansson2008}
B.~Johansson, ``On distributed optimization in networked systems,'' Ph.D.
  dissertation, School of Electrical Engineering (EES), Automatic Control, KTH,
  The address of the publisher, 2008.

\bibitem{2009:Johansson}
B.~Johansson, M.~Rabi, and M.~Johansson, ``A randomized incremental subgradient
  method for distributed optimization in networked systems,'' \emph{SIAM J.
  Optimiz.}, vol.~20, no.~3, pp. 1157--1170, Aug. 2009.

\bibitem{nedic2009distributed}
A.~Nedic and A.~Ozdaglar, ``Distributed subgradient methods for multi-agent
  optimization,'' \emph{IEEE Trans. Autom. Control}, vol.~54, no.~1, pp.
  48--61, 2009.

\bibitem{makhdoumi2017convergence}
A.~Makhdoumi and A.~Ozdaglar, ``Convergence rate of distributed {ADMM} over
  networks,'' \emph{IEEE Trans. Autom. Control}, vol.~62, no.~10, pp.
  5082--5095, 2017.

\bibitem{Rabbat:ProceedingsIEEE2018}
A.~{Nedic}, A.~{Olshevsky}, and M.~G. {Rabbat}, ``Network topology and
  communication-computation tradeoffs in decentralized optimization,'' \emph{P.
  IEEE}, vol. 106, no.~5, pp. 953--976, 2018.

\bibitem{Tao:2019}
T.~Yang, X.~Yi, J.~Wu, Y.~Yuan, D.~Wu, Z.~Meng, Y.~Hong, H.~Wang, Z.~Lin, and
  K.~H. Johansson, ``A survey of distributed optimization,'' \emph{Ann. Rev.
  Control}, vol.~47, pp. 278--305, 2019.

\bibitem{wei2012distributed}
E.~Wei and A.~Ozdaglar, ``Distributed alternating direction method of
  multipliers,'' in \emph{Proc. 51st IEEE Conf. Dec. Control}, 2012, pp.
  5445--5450.

\bibitem{ADMM_Shi:2014}
W.~{Shi}, Q.~{Ling}, K.~{Yuan}, G.~{Wu}, and W.~{Yin}, ``On the linear
  convergence of the {ADMM} in decentralized consensus optimization,''
  \emph{IEEE Trans. Signal Proc.}, vol.~62, no.~7, pp. 1750--1761, 2014.

\bibitem{falsone2020tracking}
A.~Falsone, I.~Notarnicola, G.~Notarstefano, and M.~Prandini, ``Tracking-admm
  for distributed constraint-coupled optimization,'' \emph{Automatica}, vol.
  117, p. 108962, 2020.

\bibitem{khatana2020d}
V.~Khatana and M.~V. Salapaka, ``{D-DistADMM: A} ${O}(1/k) $ distributed {ADMM}
  for distributed optimization in directed graph topologies,'' in \emph{Proc.
  59th IEEE Conf. Dec. Control}, Dec. 2020, pp. 2992--2997.

\bibitem{themisCDC:2013}
T.~{Charalambous}, Y.~{Yuan}, T.~{Yang}, W.~{Pan}, C.~N. {Hadjicostis}, and
  M.~{Johansson}, ``Decentralised minimum-time average consensus in digraphs,''
  in \emph{Proc. 52nd IEEE Conf. Dec. Control}, 2013, pp. 2617--2622.

\bibitem{themisTCNS:2015}
------, ``Distributed finite-time average consensus in digraphs in the presence
  of time delays,'' \emph{IEEE Trans. Control Netw. Syst.}, vol.~2, no.~4, pp.
  370--381, 2015.

\bibitem{charalambous2018stop}
T.~Charalambous and C.~N. Hadjicostis, ``When to stop iterating in digraphs of
  unknown size? an application to finite-time average consensus,'' in
  \emph{2018 European Control Conference (ECC)}.\hskip 1em plus 0.5em minus
  0.4em\relax IEEE, 2018, pp. 1--7.

\bibitem{2010:christoforos}
A.~D. {Dom\'{i}nguez-Garc\'{i}a} and C.~N. Hadjicostis, ``Coordination and
  control of distributed energy resources for provision of ancillary
  services,'' in \emph{{IEEE} Int. Conf. Smart Grid Commun.}, Oct. 2010, pp.
  537--542.

\bibitem{2009:Ye}
Y.~Yuan, G.-B. Stan, L.~Shi, and J.~Gon\c{c}alves, ``{Decentralised final value
  theorem for discrete-time LTI systems with application to minimal-time
  distributed consensus},'' in \emph{Proc. 48th IEEE Conf. Dec. Control}, Dec.
  2009, pp. 2664--2669.

\bibitem{2013:Ye}
Y.~Yuan, G.-B. Stan, M.~Barahona, L.~Shi, and J.~Gon\c{c}alves, ``Decentralised
  minimal-time consensus,'' \emph{Automatica}, vol.~49, no.~5, pp. 1227--1235,
  May 2013.

\bibitem{2008:Cortes}
J.~Cort\'{e}s, ``Distributed algorithms for reaching consensus on general
  functions,'' \emph{Automatica}, vol.~44, pp. 726--737, March 2008.

\bibitem{Silvia:CDC2013}
S.~{Giannini}, D.~{Di Paola}, A.~{Petitti}, and A.~{Rizzo}, ``On the
  convergence of the max-consensus protocol with asynchronous updates,'' in
  \emph{Proc. 52nd IEEE Conf. Dec. Control}, 2013, pp. 2605--2610.

\bibitem{Khatana:2020ACC}
V.~{Khatana}, G.~{Saraswat}, S.~{Patel}, and M.~V. {Salapaka},
  ``Gradient-consensus method for distributed optimization in directed
  multi-agent networks,'' in \emph{Proc. Amer. Control Conf. (ACC)}, 2020, pp.
  4689--4694.

\bibitem{2012:Allerton_Shames}
I.~Shames, T.~Charalambous, C.~Hadjicostis, and M.~Johansson, ``Distributed
  network size estimation and average degree estimation and control in networks
  isomorphic to directed graphs,'' in \emph{50th Ann. Allerton Conf. Commun.,
  Control, and Computing}, Oct. 2012, pp. 1885--1892.

\bibitem{deng2016global}
W.~Deng and W.~Yin, ``On the global and linear convergence of the generalized
  alternating direction method of multipliers,'' \emph{Journal of Scientific
  Computing}, vol.~66, no.~3, pp. 889--916, 2016.

\end{thebibliography}

\end{document}